\theoremstyle{plain}
\newtheorem{thm}{Theorem}[section]
\newtheorem{prop}[thm]{Proposition}
\newtheorem{lem}[thm]{Lemma}
\newtheorem{cor}[thm]{Corollary}
\theoremstyle{definition} 
\newtheorem{dfn}[thm]{Definition}
\newtheorem{note}[thm]{Notation}
\theoremstyle{remark}
\newtheorem{rem}[thm]{Remark}
\newcommand{\N}{\mathbb{N}}
\newcommand{\R}{\mathbb{R}}
\newcommand{\pa}{\mathrm{PA}}
\newcommand{\fixn}{\mathrm{Fix}_\mathrm{N}}
\newcommand{\out}[1]{\mathrm{Out}(F_{#1})}
\newcommand{\auto}{\mathrm{Aut}(F)}
\renewcommand{\outer}{\mathrm{Out}(F)}
\newcommand{\fix}[1]{\mathrm{Fix}({#1})}
\newcommand{\nfix}[1]{\mathrm{Fix_N({#1})}}
\newcommand{\wt}{\widetilde}
\newcommand{\OUT}{\mathrm{Out}(F)}
\newcommand{\IT}{\mathrm{Isom}(T_{+})}
\begin{document}
\title{Fibered commensurability on $\mathrm{Out}(F_{n})$}

\author{Hidetoshi Masai}
\address{Department of Mathematics Tokyo Institute of Technology 2-12-1, Ookayama, Meguro-ku, Tokyo. 152-8551. Japan}
\email{masai@math.titech.ac.jp}

\author{Ryosuke Mineyama}
\address{Department of Mathematics, Osaka City University, Suimiyoshi, Osaka 558--8585, Japan}
\email{mineyama@sci.osaka-cu.ac.jp}

\subjclass[2010]{20E36(primary), and 20E05(secondary)}
\keywords{Outer automorphisms of free groups, fibered commensurability}
\maketitle

\begin{abstract}
We define and discuss a notion called fibered commensurability of outer automorphisms of free groups.
This notion lets us study symmetry of outer automorphisms.
The notion of fibered commensurability is first defined by Calegari-Sun-Wang on mapping class groups.
The Nielsen-Thurston type of mapping classes is a commensurability invariant.
One of the important facts of fibered commensurability on mapping class groups is for the case of pseudo-Anosovs,
there is a unique minimal element in each fibered commensurability class.
For outer automorphisms, we first show that being atoroidal and fully irreducible is a commensurability invariant. 
Then for such outer automorphisms, we prove that there is a unique minimal element in each fibered commensurability class,
under a certain asymmetry condition on the ideal Whitehead graphs.
\end{abstract}

\section{Introduction}
We define {\em fibered commensurability} of outer automorphisms of free groups.
Fibered commensurability is defined by a natural covering relation between outer automorphisms, which allows us to discuss symmetry of outer automorphisms.
First, we recall the definition of fibered commensurability of mapping classes on surfaces introduced by Calegari-Sun-Wang \cite{CSW}.
A mapping class $\phi_1$ on a compact surface $S_1$ is said to cover another mapping class $\phi_2$ on a compact surface $S_2$ if $\phi_1$ is a lift of a power $\phi_2^n$ with respect to a finite covering $p:S_1\rightarrow S_2$.
Two mapping classes are (fibered) commensurable if there is a third mapping class which covers both.
We may rephrase this covering relation in terms of the action on the fundamental groups;
$\phi_1$ covers $\phi_2$ if we can find a finite index copy of $\pi_1(S_1)$ in $\pi_1(S_2)$ and representatives $\Phi_{1}, \Phi_{2}$ of $\phi_{1},\phi_{2}$ respectively so that $(\Phi_2)^n_*|_{\pi_1(S_1)} = (\Phi_1)_*$ for some $n\in\mathbb{N}$.
This equivalent definition of coverings can be adapted for the case of outer automorphisms of free groups of rank $\geq 2$.
One subtle difference is that we need to pass to powers to define commensurability as an equivalence relation, 
see Section \ref{sec.fcms} for a detail.
In Section \ref{sec.fcms}, we define an equivalence relation called covering equivalence in order to make each commensurability class a partially 
ordered set.
For the case of surfaces, it can be readily seen that the Nielsen-Thurston type is a commensurability invariant, see \cite{CSW}.
It is well-known that there are several similarities between pseudo-Anosov mapping classes and fully irreducible outer automorphisms.
However, in the case of free groups, fully irreducible elements may be covered by reducible elements.
Indeed, given any {\em geometric} fully irreducible outer automorphism $\phi$, 
any lift of $\phi$ corresponding to a mapping class of a surface with more than one boundary components would have a reducible power.
This is because such a lift has a power which fixes the conjugacy class of a free factor subgroup corresponding to the boundary.
The first aim of this paper is to show that for the non-geometric case, fully irreducibility is a commensurability invariant property.

It is proved in \cite{CSW,Mas13,Mas17} that for the case of pseudo-Anosov mapping classes,
each commensurability class contains a unique minimal (orbifold) element.
Moreover, in \cite{Mas13}, it is proved that for any pseudo-Anosov mapping class whose 
(un)stable foliation has singularities only on punctures,
the minimal element must be defined on a surface (i.e. no orbifold singular point exists).
The existence of the unique minimal element is used for example, 
to prove that cusped random mapping tori are non-arithmetic in \cite{MasGGD}.
In this paper, instead of defining ``orbifolds'', we consider the case where fully irreducible elements are {\em atoroidal} and each component of the ideal Whitehead graph admits no symmetry.
The definition of atoroidal outer automorphisms and the ideal Whitehead graphs are recalled in Section \ref{sec.pre}.
Since every element $\phi\in\out{2}$ is geometric, we focus on $F_{n}$ with $n\geq 3$.
Then our main theorem is the following.
\begin{thm}\label{thm.main}
Let $\phi$ be an outer automorphism of a free group of rank $\geq 3$.
Suppose $\phi$ is 
\begin{itemize}
\item atoroidal, fully irreducible, and
\item every component of the ideal Whitehead graph admits no symmetry. 
\end{itemize}
Then the commensurability class of $\phi$ contains a unique minimal element (or more precisely, covering equivalence class) which is an outer automorphism of a free group of rank $\geq 3$.
\end{thm}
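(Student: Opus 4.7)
The plan is to adapt the strategy of Calegari--Sun--Wang \cite{CSW} and \cite{Mas13} for pseudo-Anosov mapping classes, with the topological type of the underlying surface replaced by the rank of the free group and the Nielsen--Thurston decomposition data replaced by the ideal Whitehead graph. \textbf{Existence of a minimal element} is the easy half. By the Nielsen--Schreier formula, a proper finite-index inclusion $F_{n'} \hookrightarrow F_n$ forces $n' > n$, so the rank is a strictly increasing invariant under the covering order. Combined with the commensurability invariance of being ageometric and fully irreducible, which places every member of the class into some $\out{n'}$ with $n' \geq 3$ (since $\out{2}$ is entirely geometric), one obtains a positive-integer-valued bounded-below invariant, so each commensurability class contains a covering-minimal representative, necessarily of rank at least $3$.

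For \textbf{uniqueness} up to covering equivalence, let $\phi_1 \in \out{n_1}$ and $\phi_2 \in \out{n_2}$ be two covering-minimal representatives of the same commensurability class. By definition there exist $\psi \in \out{m}$ and finite-index inclusions $F_{n_i} \hookrightarrow F_m$ so that, after replacing $\psi$ by a common power, honest lift relations $\Psi^{k}|_{F_{n_i}} = \Phi_i$ hold for appropriate automorphism representatives. The element $\psi$ is itself ageometric and fully irreducible by the invariance result, so its ideal Whitehead graph $\mathrm{IW}(\psi)$ is well defined. The crucial step is to show that the finite-index inclusion $F_{n_i} \hookrightarrow F_m$ induces a covering-type relation between $\mathrm{IW}(\psi)$ and $\mathrm{IW}(\phi_i)$: periodic rays of the attracting lamination of $\phi_i$ lift through $F_{n_i} \hookrightarrow F_m$ to periodic rays of $\psi$, realising $\mathrm{IW}(\phi_i)$ as a quotient of $\mathrm{IW}(\psi)$ by a finite group $G_i$ of graph automorphisms preserving the decomposition into connected components. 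Because every component of the relevant IWGs is asymmetric by hypothesis, the $G_i$-stabilizer of any component acts trivially on it, so the action of $G_i$ is reduced to a permutation of components. Taking a common refinement inside $F_m$ (intersecting $F_{n_1}$ with $F_{n_2}$, and possibly passing to a normal core), the minimality of $\phi_1$ and $\phi_2$ combined with the rigidity forced by the asymmetry hypothesis forces this common refinement to already coincide (up to conjugacy) with each $F_{n_i}$, so $\phi_1$ and $\phi_2$ are covering equivalent.

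The \textbf{main obstacle} is the graph-covering dictionary outlined above: verifying that a finite-index inclusion $F_{n_i} \hookrightarrow F_m$ genuinely produces a finite quotient of ideal Whitehead graphs by an action that respects the component decomposition, and that an asymmetric component then forces the associated deck action on that component to be trivial. Unlike the surface case, where this covering relation is encoded by an actual (branched) covering of the underlying orbifold, here one must argue intrinsically by tracking principal vertices of the attracting tree at the boundary of Culler--Vogtmann Outer space and their turn structure, and show that these data descend along finite-index inclusions in the way that the quotient formulation demands. Once this dictionary is established, the remaining assembly into a uniqueness statement is formal and parallels \cite{Mas13}.
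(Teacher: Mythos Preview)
Your proposal has a genuine gap: it treats the covering order as if it were governed by the rank alone, and completely ignores the \emph{dynamical} half of the problem, namely the stretch factor. Recall that $\phi_1 > \phi_2$ allows passing to a power $k$; in particular $\phi^2 > \phi$ strictly for any infinite-order $\phi$, with the same underlying free group. So the rank is \emph{not} strictly monotone along the order, and your existence argument (``bounded-below integer invariant, hence a minimal representative exists'') fails as stated. More seriously, in your uniqueness paragraph the final sentence ``so $\phi_1$ and $\phi_2$ are covering equivalent'' is exactly where the argument breaks: even after you have arranged $F(\phi_1) = F(\phi_2)$, nothing you wrote rules out, say, $\phi_2$ being conjugate to $\phi_1^2$, or $\log\lambda(\phi_1)/\log\lambda(\phi_2)$ being a non-integer rational. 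The ideal Whitehead graph and the asymmetry hypothesis say nothing about this ratio.

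The paper's proof supplies precisely the missing dynamical ingredient. First it proves (Lemma~\ref{lem.top-min}) something stronger than ``minimal rank exists'': there is a single free group $F(\phi_m)$ that contains $F(\psi)$ as a finite-index subgroup for \emph{every} $\psi\in[\phi]$; this is where the asymmetry of the components of $\wt{\mathcal{IW}}$ is actually used, to show $\langle F(\phi)\cup F(\psi)\rangle$ acts freely on $T_+$ and is hence free. Second, via a delicate construction of equivariant quotients of train-track maps (Lemmas~\ref{lem.ageometric} and~\ref{lem.dyn}, Corollary~\ref{cor.lift}), it shows that every $\psi\in[\phi]$ can be pushed down to an element on $F(\phi_m)$ \emph{with the same stretch factor}. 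Only then does it run a gcd argument on $\log\lambda$: if $\log\lambda(\phi_2)/\log\lambda(\phi_1)\notin\mathbb{N}$, one forms $\phi_2^{n}\phi_1^{m}\in\mathrm{Stab}(\Lambda)$ realising the gcd, and invokes the Bestvina--Feighn--Handel fact that $\mathrm{Stab}(\Lambda)$ is virtually cyclic (Proposition~\ref{prop.BFH2.16}) to conclude this new element lies in $[\phi]$. Iterating and using the lower bound on dilatations in $\mathrm{Out}(F(\phi_m))$ yields $\phi_{\min}$ with $\log\lambda(\varphi)/\log\lambda(\phi_{\min})\in\mathbb{N}$ for all $\varphi\in[\phi]$. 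None of this is visible in your outline; the analogy with \cite{Mas13} is real, but there too the minimal element is singled out by dilatation, not by the topology of the base alone.
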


\begin{rem}
In \cite{Pfa}, Pfaff give a method to construct atoroidal 
fully irreducible outer automorphisms whose ideal Whitehead graph is given by ``pasting'' two ideal Whitehead graphs of other outer automorphisms.
By using her result, we can construct examples of outer automorphisms satisfying the condition of Theorem \ref{thm.main}.
\end{rem}

This paper is organized as follows.
In Section \ref{sec.pre}, we prepare basic facts of $\out{n}$.
We define fibered commensurability in Section \ref{sec.fcms}, and there,
we prove that it is an equivalence relation.
Several basic properties of fibered commensurability on $\out{n}$ are also discussed in Section \ref{sec.fcms}.
Then in Section \ref{sec.minimal}, we prove the main theorem.


\section{Preliminaries}\label{sec.pre}
We refer the reader to the book of Handel and Mosher \cite{HM11} for the topics discussed in this section.
Most of our terminologies are defined along Chapter 2 of the book.

\subsection{Marked graphs}
In this article, unless otherwise stated, a \emph{graph} $G$ is always a finite cell complex of dimension $1$ 
with all vertices having valence greater than or equal to $2$.
We denote by $V(G)$, $E(G)$ the set of vertices and the set of edges respectively.
A \emph{metric graph} is a graph $G$ equipped with a path metric defined 
by a function $\ell: E(G) \to (0, \infty)$.
Here, a {\em path} $\gamma$ is a concatenation of edges $\gamma = e_1e_2\cdots e_n$ ($e_i \in E(G)$).
If the initial point of a path $\gamma$ coincides with the endpoint of $\gamma$ then it is called a \emph{loop}.
A \emph{geodesic} on a graph is a locally shortest path parametrized by arc length.
Any finite path $\gamma$ is homotopic rel endpoints to a unique geodesic; 
such a geodesic is said to be \emph{obtained from $\gamma$ by tightening} and denoted by $\gamma_{\sharp}$. 
An $\mathbb{R}$-tree is a metric space $T$ such that for any distinct two points $x, y\in T$,
 there exists a unique embedded topological arc, 
 denoted $[x,y]\subset T$, with endpoints $x,y$ and of length $d(x,y)$.
We call such an embedded arc a \emph{geodesic} on the $\R$-tree $T$.
An $\mathbb{R}$-tree is called \emph{simplicial} if the set of vertices forms a discrete set.
If an $\mathbb{R}$-tree admits an isometric action by a free group $F$, we call it an $F$-tree.
The universal covering $\wt{G}$ of a finite metric graph $G$ is a simplicial $F$-tree where $F\cong \pi_{1}(G)$.

The $n$-\emph{rose} $R_n$ is a graph defined as the wedge of $n$ circles. 
We call a homotopy equivalence $\rho: R_n \to G$ from the $n$-rose to a metric graph $G$ a \emph{marking}.
A \emph{marked graph} is a pair $(G, \rho)$ of a metric graph and a marking.


Given graphs $G, G'$, 
a continuous map $p : G' \to G$ which maps vertices to vertices and edges to edges
is called a \emph{covering} if $p$ satisfies the following two conditions:
\begin{itemize}
	\item[(i)]
	Induced map $V(p) : V(G') \to V(G)$ on the vertex set is surjective.
	\item[(ii)]
	Let $E(p) : E(G') \to E(G)$ be the induced map on the set of edges.
	For each vertex $v' \in V(G')$, the restriction of $E(p)$ on the set of edges with one endpoint on $v'$ is bijective.
\end{itemize}
If $G$ and $G'$ are metric graphs, we further require coverings to be local isometries.
\subsection{Train track representatives}
Let $F$ be a free group of rank $n \ge 2$, $\auto$ the group of automorphisms of $F$.
We denote by $i_\gamma : F \to F$ the inner automorphism of $\gamma \in F$, that is,
$i_\gamma(\delta) = \gamma \delta \gamma^{-1}$ for $\delta \in F$.
Let $\mathrm{Inn}(F)$ be the inner automorphism group.
The quotient 
\[
	\OUT= \auto / \mathrm{Inn}(F)
\]
is called the \emph{outer automorphism group} of $F$.
In what follows, by an outer automorphism, 
we mean an outer automorphism of a free group of rank $\geq 2$ unless otherwise stated.

As we need to consider outer automorphisms on several free groups, 
we prepare the following notation:
\begin{note}
For an outer automorphism $\phi$,
we use the notation $F(\phi)$ to denote the free group on which $\phi$ is defined. 
\end{note}

The group $\OUT$ acts on the set of conjugacy classes by 
the quotient of $\auto$ action on $F$.
An element $\phi \in \OUT$ is \emph{reducible} if 
there exists a non-trivial free decomposition $F = A_1 \ast A_2 \ast \cdots A_r \ast B$.
such that $\phi$ permutes the non-trivial conjugacy classes of $A_1,\ldots, A_r$.
An outer automorphism $\phi$ is \emph{irreducible} if it is not reducible.
If, in addition, $\phi^k$ ($k \ge 1$) are all irreducible then $\phi$ is
called \emph{fully irreducible}.

We fix an isomorphism $\pi_{1}(R_{n})\cong F$.
Then each loop in $R_{n}$ is labeled by an element of $F$.
Any homotopy equivalence $g$ between marked graphs determines
a homotopy equivalence between roses.
By considering the induced action on $\pi_{1}(R_{n})\cong F$,
we get an outer automorphism $\phi$ of $F$ via the marking.
Thus the map $g$ on a marked graph represents an outer automorphism $\phi$.
We always assume that $g$ is an immersion on each edge.
In addition, if $g$ maps vertices to vertices, 
we say that $g$ is a \emph{topological representative} of $\phi$.
A topological representative $g : G \to G$ on a metric graph $G$ is an \emph{affine train track map} if
\begin{itemize}
	\item for every edge $e \in E(G)$, $g\vert_{e}$ uniformly expands length by the stretch factor $\lambda>1$, 
	\item every power $g^k$ ($k \ge 1$) is an immersion on each edge, and
	\item for each pair of edges $e, e' \in E(G)$, we have $g^k(e) \supset e'$ for some $k \ge 1$.
\end{itemize}
It is known that every fully irreducible outer automorphism $\phi$
can be represented by an affine train track map (\cite[Theorem 1.7]{BH92}).
We call such a representative an \emph{affine train track representative}.
For a fully irreducible $\phi$, 
the {\em stretch factor} $\lambda>1$ can also be characterized as the exponential growth rate of 
the cyclic word length of any element in $F$ under iteration of any representative of $\phi$ \cite[Remark 1.8]{BH92}.
Hence the stretch factor $\lambda$ is independent of the choice of train track representatives, and denoted $\lambda(\phi)$.
Also it can be readily seen that $\lambda(\phi) = \lambda(\psi \phi \psi^{-1})$ for any $\psi \in \OUT$.
See also Proposition \ref{stability_of_limits} below.
Let $g : G \to G$ be an affine train track map.
We say that a path $\sigma$ in $G$ is \emph{legal} if $g(\sigma)_\sharp = g(\sigma)$.
In contrast, if $g^p(\sigma)_\sharp = \sigma$ for some $p\ge 1$,
the path $\sigma$ is called a \emph{periodic Nielsen path}. 
A \emph{Nielsen path} is a periodic Nielsen path with $p =1$. 
A periodic Nielsen path is {\em indivisible} if it is not a concatenation of non-trivial periodic Nielsen paths.
For a point $x \in G$, 
two geodesics $\gamma$ and $\gamma'$ emanating from $x$ determines 
the \emph{same direction} if $\gamma \cap \gamma' \neq \{x\}$.
This defines an equivalence relation on the set of arcs with an endpoint $x$ and
an equivalence class is called a \emph{direction} at $x$.
We denote by $D_{x}$ the set of directions at $x$.
These sets of directions play a similar role as tangent spaces.
Any homotopy equivalence $h:G\rightarrow G$ which maps vertices to vertices naturally induces a map, denoted $Dh$, on the set of directions.
A \emph{turn} is an unordered pair of directions with the same endpoint.
We may also define directions and turns on the universal cover $\wt G$ similarly.

\subsection{Periodicity of outer automorphisms}
The action of elements in $\auto$ on $F$ can be extended to a continuous action on the Gromov boundary $\partial F $ of $F$.
For $\Phi\in\auto$, we denote such an extension by $\hat{\Phi}: \partial F \to \partial F$.
Let $\phi \in \OUT$ be fully irreducible and $\Phi \in \auto$ its representative.  
We denote the fixed point set of the boundary extension by $\fix{\hat{\Phi}}$, 
and denote the subset of non-repelling fixed points by $\nfix{\hat{\Phi}}$. 
We say that $\Phi$ is a \emph{principal automorphism} if $\nfix{\hat{\Phi}}$ contains at least three points. 
The set of principal automorphisms representing $\phi$ is denoted by $\mathrm{PA}(\phi)$.
Let $g: G \to G$ be a train track representative of $\phi$ and $\wt G$ a universal covering of $G$.
There is a bijection between representatives of $\phi$ and the set of lifts of $g$ to $\wt G$.
The bijection is characterized by the twisted equivariance;
let $\breve{g}$ be a lift of $g$, then for any $\gamma\in F$, $\breve{g}\circ\gamma = \Phi(\gamma)\circ\breve{g}$ holds for some $\Phi\in\phi$.
A lift $\breve{g}:\wt G \to \wt G$ is called a principal lift if the corresponding automorphism is a principal automorphism.
A vertex $v \in V(G)$ is called a {\em principal vertex} if $v$ has at least three periodic directions
or $v$ is an endpoint of an indivisible periodic Nielsen path.
We call a lift $\wt{v}$ of a principal vertex $v$ also principal vertex on the universal covering.
One can see that there exists at least one principal vertex \cite[ Lemma 3.19]{FH11}.

A fully irreducible $\phi\in\OUT$ is called {\em rotationless} if for each $k>0$ and each $\Phi_k \in\mathrm{PA}(\phi^{k})$,
there is $\Phi\in\mathrm{PA}(\phi)$ such that $\Phi^{k} = \Phi_k$.


\subsection{Attracting tree, stable lamination and ideal Whitehead graph}\label{sec.TLW}
For each fully irreducible outer automorphism $\phi \in \OUT$, 
there exists an associated $\R$-tree $T_+(\phi)$ called the {\em attracting tree} for $\phi$.
We recall the construction of $T_+(\phi)$.
Let $g:G\to G$ be an affine train track representative of $\phi \in \OUT$.
Here the graph $G$ is marked by $\rho_G : R_n \to G$.
For each $i \ge 0$, the marking $g^i \circ\rho_G : R_n \to G$ gives a new marked graph which we denote by $G_i$.
Each $G_i$ is equipped with a metric so that each arc $c$ has length
\[
	l_{G_i}(c) = l_G(g^i(c)_\sharp)/\lambda^i.
\] 
The map $g:G\to G$ induces a homotopy equivalence denoted 
$g_{i+1,i} : G_i \to G_{i+1}$ that preserves marking,
namely, $g_{i+1,i} \circ \rho_G$ is homotopic to $g^i \circ\rho_G$. 
We fix a base point on $G_i$ so that the marking $g^i\circ\rho_G : R_r \to G_i$ preserves the base point. 
Then the maps $g_{j,i} = g_{j,j-1}\circ\cdots\circ g_{i+1,i} : G_i \to G_j$ also preserve the base point.
We lift the base point of each $G_i$ to the tree $\wt{G_i}$ and choose a lift $\wt{g_{j,i}}$ of $g_{j,i}$ 
via the base point.
Thus we get a direct system 
\[
	\wt{G} = \wt{G_0} \xrightarrow{\wt{g_{1,0}}} \wt{G_1} 
	\xrightarrow{\wt{g_{2,1}}} \cdots.
\]
Now we define the \emph{attracting tree} $T_+(\phi)$ for $\phi$ as the $F$-equivariant direct limit of 
the sequence $\{\wt{G_i}\}$.
The attracting tree is an $\R$-tree and well-defined up to isometric conjugacy.
In particular, it does not depend on the choice of the train track representatives.
If there is no confusion we simply denote $T_+(\phi)$ by $T_+$.
It is known that we have the direct limit map $f_g : \wt G \to T_+$ which is surjective,
$F$-equivariant and isometry on each legal path with respect to $g$.
Below we recall such basic properties of $T_{+}$ and $f_{g}$ from \cite{HM11}.

\begin{thm}[{\cite[Theorem 2.15.]{HM11}}]\label{associated map on T}
	Let $\phi \in \OUT$ be fully irreducible, and $T_+ = T_+(\phi)$. 
	To each $\Phi \in \auto$ representing $\phi$, there is associated a homothety 
	$\Phi_+: T_+ \to T_+$ with stretch factor $\lambda(\phi)$ satisfying the following properties:
	\begin{itemize}
	\item[$(1)$]
		For each affine train track representative $g : G \to G$ of $\phi$, letting
		$\breve{g} : \wt G\to \wt G$ be the lift corresponding to $\Phi$, 
		we have $\Phi_+ \circ f_g = f_g \circ \breve{g}$.
	\item[$(2)$] 
		For each $\Phi, \Phi'$ representing $\phi$, if $\gamma \in F$ is the unique element such that
		$\Phi' = i_\gamma \circ \Phi$ then $\Phi'_+ = \gamma \circ \Phi_+$.
	\end{itemize}
\end{thm}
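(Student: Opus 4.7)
The plan is to use the direct limit structure underlying $T_+(\phi)$ to promote the chosen lift $\breve g : \wt G \to \wt G$ of $g$ to a homothety of $T_+$. I would take the formula in property (1) as the definition of $\Phi_+$: given $p \in T_+$, choose any $\wt x \in \wt G$ with $f_g(\wt x) = p$ and set
\[
\Phi_+(p) := f_g(\breve g(\wt x)).
\]
The first task is to verify that this is well-defined. Suppose $f_g(\wt x) = f_g(\wt y)$. By Proposition \ref{prop.identify}, there exists $k \geq 0$ such that $g_\sharp^k([\wt x, \wt y])$ is either trivial or Nielsen. Since $\breve g_\sharp([\wt x, \wt y]) = [\breve g(\wt x), \breve g(\wt y)]$, applying $g_\sharp$ once more gives that $g_\sharp^k([\breve g(\wt x), \breve g(\wt y)]) = g_\sharp^{k+1}([\wt x, \wt y])$, which is still trivial or Nielsen because Nielsen paths are fixed (up to homotopy rel endpoints) by $g_\sharp$ in the rotationless setting; Proposition \ref{prop.identify} in the other direction then yields $f_g(\breve g(\wt x)) = f_g(\breve g(\wt y))$.

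Next, I would verify that $\Phi_+$ is a homothety with stretch factor $\lambda = \lambda(\phi)$. Unwinding the direct limit description of $T_+$, one obtains the distance formula
\[
d_{T_+}(f_g(\wt x), f_g(\wt y)) = \lim_{k \to \infty} \frac{l_G(g_\sharp^k([\wt x, \wt y]))}{\lambda^k}
\]
for any $\wt x, \wt y \in \wt G$. Applying this to the pair $\breve g(\wt x), \breve g(\wt y)$ and using the identification $[\breve g(\wt x), \breve g(\wt y)] = \breve g_\sharp([\wt x, \wt y])$, I get
\[
d_{T_+}(\Phi_+(f_g(\wt x)), \Phi_+(f_g(\wt y))) = \lim_{k \to \infty} \frac{l_G(g_\sharp^{k+1}([\wt x, \wt y]))}{\lambda^k} = \lambda \cdot d_{T_+}(f_g(\wt x), f_g(\wt y)),
\]
so $\Phi_+$ multiplies all distances by $\lambda$; surjectivity onto $T_+$ follows from surjectivity of $f_g$ combined with bijectivity of $\breve g$ on $\wt G$.

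Property (2) is essentially immediate from the definition. If $\Phi' = i_\gamma \Phi$, then the lift of $g$ corresponding to $\Phi'$ is $\breve g' = \gamma \circ \breve g$, where $\gamma$ acts on $\wt G$ as a deck transformation; using the $F$-equivariance of $f_g$ one computes
\[
\Phi'_+(f_g(\wt x)) = f_g(\gamma \breve g(\wt x)) = t_\gamma(f_g(\breve g(\wt x))) = t_\gamma(\Phi_+(f_g(\wt x))).
\]
The main obstacle I expect is verifying that the construction is independent of the chosen train track representative: given another representative $g' : G' \to G'$ with lift $\breve g'$ corresponding to the same $\Phi$, one must exhibit a canonical $F$-equivariant isometry between the two candidate attracting trees under which the two homotheties agree. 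This would be achieved by comparing the two direct systems via a homotopy equivalence $h : G \to G'$ intertwining $g$ and $g'$ up to an inner automorphism, lifting $h$ compatibly with the chosen lifts, and checking that the induced maps fit into a diagram that commutes in the direct limit — essentially a functoriality statement for the $T_+$ construction.
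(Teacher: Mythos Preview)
The paper does not prove this theorem: it is quoted verbatim as \cite[Theorem 2.15]{HM11} and used as a black box, so there is no in-paper argument to compare your proposal against.

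That said, a couple of remarks on your sketch. First, your well-definedness step invokes Proposition~\ref{prop.identify}, which in the paper is stated only under a rotationless hypothesis, while Theorem~\ref{associated map on T} as quoted carries no such assumption; you would need either to reduce to the rotationless case first or to appeal to the more general form in \cite{HM11}. Second, in the surjectivity line you write ``bijectivity of $\breve g$ on $\wt G$'', but $\breve g$ is typically not injective (the train track map $g$ folds edges); what you actually need, and what does hold, is surjectivity of $\breve g$, which follows from irreducibility of $g$. With those fixes the outline is the standard one and matches how the result is established in \cite{HM11}.
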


\begin{lem}[{\cite[Lemma 2.16.]{HM11}}]\label{structure of T}
	Suppose that $\phi \in \outer$ is fully irreducible and rotationless,
	$g : G \to G$ is an affine train track representative, $\Phi$ is a principal automorphism representing
	$\phi$ and $\breve{g} : \wt{G} \to \wt{G}$ is the principle lift corresponding to $\Phi$, 
	and $\Phi_+ : T_+ \to T_+$ is as in Theorem \ref{associated map on T}. Then
	\begin{itemize}
	\item[$(1)$] 
		$f_g( \fix{\breve{g}})$ is a branch point $b$ of $T_+$, and $\fix{\Phi_+} = \{b\}$.
	\item[$(2)$] 
		Every direction based at $b$ is fixed by $D\Phi_+$ and has the form $Df_g(d)$ where
		$d$ is a fixed direction based at some $\wt{v} \in \fix{\breve{g}}$.
	\item[$(3)$]
		The assignment $\Phi \mapsto b = \fix{\Phi_+}$ of $(1)$ defines a bijection between
		$PA(\phi)$ and the set of branch points of $T_+$.
	\end{itemize}
\end{lem}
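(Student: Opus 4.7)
The plan is to lift dynamical information from $\wt G$ to $T_+$ through $f_g$, using that $\Phi_+$ is a $\lambda$-homothety with $\lambda>1$ (Theorem \ref{associated map on T}) and that $f_g$ is isometric on legal paths (Proposition \ref{prop.identify}). For the identification of $b$ in (1), $\Phi_+^{-1}$ is a strict contraction of the complete $\R$-tree $T_+$ with ratio $1/\lambda$, so a Banach-type argument yields $\fix{\Phi_+}=\{b\}$ for a unique $b\in T_+$. Because $\Phi$ is principal, the principal lift $\breve g$ has at least one fixed vertex $\wt v$, and Theorem \ref{associated map on T}(1) gives $\Phi_+(f_g(\wt v))=f_g(\breve g(\wt v))=f_g(\wt v)$, hence $b=f_g(\wt v)$, and more generally $f_g(\fix \breve g)=\{b\}$.

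To see that $b$ is a branch point (completing (1)), note that rotationlessness and principality produce at least three fixed directions $d_1,d_2,d_3$ at $\wt v$ (with the endpoint of an indivisible periodic Nielsen path handled separately through the fixed directions it provides), each generating a $\breve g$-invariant legal ray $R_i$. Because $f_g$ is isometric on each $R_i$, the images yield directions $Df_g(d_i)$ at $b$; these are pairwise distinct, for otherwise the geodesic between nearby points on two $R_i$'s would be collapsed by $f_g$ and so tighten to a trivial or Nielsen path under $g_\sharp^k$ by Proposition \ref{prop.identify}, contradicting the persistence of legality under $g_\sharp$. For (2), the intertwining $Df_g\circ D\breve g=D\Phi_+\circ Df_g$ shows $D\Phi_+$ fixes every direction of the form $Df_g(d)$ with $d$ fixed by $D\breve g$. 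Conversely, any direction $\delta$ at $b$ is represented by an embedded germ $[0,\varepsilon)\to T_+$; applying $\Phi_+^{-n}$ contracts this germ toward $b$, and lifting through the surjection $f_g$ to germs in $\wt G$ based at points of $f_g^{-1}(b)$, a limit realises $\delta$ as $Df_g$ of a fixed direction at some $\wt v'\in\fix\breve g$.

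For (3), injectivity follows from Theorem \ref{associated map on T}(2): a second principal representative of $\phi$ has the form $\Phi'=i_\gamma\Phi$ with $\Phi'_+=t_\gamma\circ\Phi_+$, so $\fix{\Phi'_+}=t_{\gamma^{-1}}(\fix{\Phi_+})$, which differs from $\fix{\Phi_+}$ unless $\gamma=1$, since $F$ acts freely on branch points of $T_+$ (a standard consequence of full irreducibility). For surjectivity, given a branch point $b\in T_+$, one inverts the construction of (1)--(2) by choosing $\wt v\in f_g^{-1}(b)$ with enough fixed directions to realise those at $b$, and letting $\Phi$ correspond to the lift of $g$ fixing $\wt v$; the three legal rays at $\wt v$ produce three distinct fixed boundary points of $\hat\Phi$, so $\Phi\in\pa(\phi)$.

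The main obstacle is step (2), namely establishing the exact bijection $d\leftrightarrow Df_g(d)$ between fixed directions at $\wt v$ in $\wt G$ and directions at $b$ in $T_+$. This requires simultaneously preventing distinct legal rays from being folded together by $f_g$ (using the Nielsen-path characterisation in Proposition \ref{prop.identify}) and lifting each abstract germ at $b$ back to a legal fixed-direction germ in $\wt G$. The contribution of indivisible periodic Nielsen paths to the principal vertex structure also needs extra bookkeeping, because both endpoints of such a path are identified by $f_g$ into the single point $b$, so the fixed directions at these endpoints must be catalogued consistently when verifying that every direction at $b$ is accounted for.
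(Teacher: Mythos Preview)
The paper does not supply a proof of this lemma: it is quoted verbatim from Handel--Mosher \cite[Lemma~2.16]{HM11} and used as a black-box input (alongside Proposition~\ref{prop.identify} and Theorem~\ref{associated map on T}, which are likewise cited without proof). There is therefore nothing in the present paper to compare your argument against; if you want to check your sketch you must go to the original source.

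That said, a brief remark on your outline. The strategy you describe---contraction of $\Phi_+^{-1}$ to locate $b$, intertwining $\Phi_+\circ f_g=f_g\circ\breve g$ to transport fixed data, and the Nielsen-path criterion of Proposition~\ref{prop.identify} to control identifications under $f_g$---is indeed the mechanism behind the result in \cite{HM11}. The part you flag as the obstacle, namely surjectivity in (2) (every direction at $b$ arises as $Df_g(d)$ for some fixed direction $d$), is genuinely the delicate step: your ``limit of contracted germs'' is not a proof as written, since germs in an $\R$-tree need not lift along $f_g$ in any controlled way, and one must instead use the explicit description of $f_g^{-1}(b)$ as a finite Nielsen class of principal vertices together with the classification of turns taken by leaves. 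Similarly, your surjectivity argument for (3) presupposes that one can always find a lift of $g$ fixing a chosen $\wt v\in f_g^{-1}(b)$, which is exactly what the bijection between lifts and automorphisms, combined with rotationlessness, provides---but this needs to be said rather than assumed.
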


Let $g : G\to G$ be a train track representative of $\phi \in \out{r}$.
The \emph{stable lamination} $\Lambda(g)$ of $g$ is the set of all the bi-infinite edge paths
\[
	\gamma = \cdots e_{-1}e_0e_1e_2\cdots
\]
in the graph $G$ which satisfy the following;
for any $i,j \in \mathbb{Z}$ with $i \le j$, there exit $n \in \N$ and an edge $e \in E(G)$ such that 
the path $e_i \cdots e_j$ is a subpath of $g^n(e)$.
A path $\gamma \in \Lambda(g)$ is called a \emph{leaf}.
We also consider the lifts $\wt\Lambda(\phi)$ of leaves to the universal cover $\wt G$.
A leaf in $\wt\Lambda(\phi)$ determines an unordered pair of distinct points in the boundary $\partial\wt{G}$ of 
the universal cover of $G$.
Let $T$ be an $\mathbb{R}$-tree or a free group.
We define the {\em geodesic leaf space} $\mathcal{G}T$ by
$$\mathcal{G}T:=(\partial T\times \partial T\setminus\{\mathrm{diagonal}\})/(\mathbb{Z}/2)$$
where $\mathbb{Z}/2$ acts freely by permuting the coordinates.
By the above discussion, we may think $\wt\Lambda(\phi)$ as a subset of $\mathcal{G}\wt G$.
Since $\partial\wt{G}$ is naturally identified with $\partial F(\phi)$, we also regard $\wt\Lambda(g)$ as a subset of
$\mathcal{G}F(\phi)$.
It turns out $\Lambda(g)$ is independent of the choice of $g$.
Hence we denote it by $\Lambda(\phi)$,  and call  $\Lambda(\phi)$ the stable lamination of $\phi$.

We recall the work of Handel-Moser which explains the relation between the stable lamination and the attracting tree.
\begin{lem}[{\cite[Lemma 2.21]{HM11}}]\label{lem.realization}
For any train track representative $g:G\rightarrow G$ of $\phi$ and any leaf $\wt\ell$ of $\wt \Lambda(\phi)$,
the restriction of the map $f_{g}:\wt G\rightarrow T_{+}(\phi)$ to the realization $\wt\ell$ in $\wt G$ is an isometry.
Moreover, the bi-infinite line in $T_{+}(\phi)$ which is the image of this isometry is independent of the choice of $g$, depending only on $\wt\ell$;
this image is called the realization of $\wt\ell$ in $T_{+}(\phi)$
\end{lem}
By Lemma \ref{lem.realization}, we may also regard $\Lambda(\phi)$ as a subset of $\mathcal{G}T_{+}(\phi)$.

We recall the work of Bestvina-Feighn-Handel \cite{BFH97}.
They consider the space $\mathcal{IL}$ of stable laminations $\Lambda(\phi)$ 
as $\phi$ ranges over all irreducible outer automorphisms of a fixed free group $F$.
$\mathrm{Out}(F)$ acts on $\mathcal{IL}$ via $\psi\Lambda(\phi) := \Lambda(\psi\phi\psi^{-1})$.
For $\Lambda\in\mathcal{IL}$, we denote by $\mathrm{Stab}(\Lambda)$ the stabilizer with respect to this action.
The following proposition due to Bestvina-Feighn-Handel is relevant to our discussion of commensurability.
\begin{prop}[{\cite[Theorem 2.14 and Corollary 2.15]{BFH97}}]\label{prop.BFH2.16}
Let $\phi$ be a fully irreducible outer automorphism.
Then the stabilizer $\mathrm{Stab}(\Lambda(\phi))$ is virtually cyclic.
Furthermore, if $\lambda(\psi)\not = 1$ for some $\psi\in\mathrm{Stab}(\Lambda(\phi))$, then
$\phi$ and $\psi$ have common nonzero powers.
\end{prop}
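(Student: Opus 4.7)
The plan is to show that the whole stabilizer acts on the attracting tree $T_+(\phi)$ by homotheties, then analyze the resulting homomorphism to $\mathbb{R}_{>0}$. More precisely, first I would establish that every $\psi\in\mathrm{Stab}(\Lambda(\phi))$ induces a homothety $\psi_+:T_+(\phi)\to T_+(\phi)$ of some stretch factor $\mu(\psi)>0$. The attracting tree is essentially determined by $\Lambda(\phi)$ (for instance via the dual construction to the lamination, or by observing that the leaves of $\Lambda(\phi)$ project to bi-infinite geodesics in $T_+$), so any $\Psi\in\mathrm{Aut}(F)$ representing a $\psi$ preserving $\Lambda(\phi)$ transports this structure and must act by an equivariant homothety. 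As with Theorem \ref{associated map on T}(2), changing the representative $\Psi$ alters $\psi_+$ by a translation in $F$, so the stretch factor $\mu(\psi)$ is well defined.

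Next I would check that $\mu:\mathrm{Stab}(\Lambda(\phi))\to\mathbb{R}_{>0}$ is a group homomorphism and that its image is discrete. Discreteness is the most delicate step: a clean route is to observe that the image of $\mu$ consists of stretch factors of homotheties of a fixed $F$-tree, which must be Perron-type algebraic integers of bounded degree arising from positive integer matrices associated with train track representatives; such a set accumulates nowhere in $(0,\infty)$. Hence $\mathrm{Im}(\mu)=\langle\mu_0\rangle$ is infinite cyclic (possibly trivial). Note that $\mu(\phi)=\lambda(\phi)>1$ by Theorem \ref{associated map on T}, so the image is actually nontrivial.

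Then I would control the kernel. An element of $\ker(\mu)$ acts on $T_+$ by an equivariant isometry, and in particular permutes the set of branch points. By Lemma \ref{structure of T}(3), branch points correspond bijectively with $\mathrm{PA}(\phi)/\!\sim$, a finite set modulo the action of $F$, so the kernel maps to a finite group. The residual kernel consists of equivariant isometries fixing each branch point together with its finite star of fixed directions, and standard rigidity (an equivariant isometry of an $F$-tree pointwise fixing a branch point and its directions is the identity) forces it to be trivial. Hence $\ker(\mu)$ is finite, and combined with the cyclic image this proves virtual cyclicity.

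For the second statement, if $\lambda(\psi)\neq 1$ then $\psi$ must act on $T_+$ as a genuine (non-isometric) homothety, so $\mu(\psi)\neq 1$ and $\mu(\psi)\in\langle\mu_0\rangle\setminus\{1\}$. Since $\mu(\phi)$ also lies in this cyclic group, there exist nonzero integers $n,m$ with $\mu(\phi^n)=\mu(\psi^m)$, i.e.\ $\phi^n\psi^{-m}\in\ker(\mu)$; a further finite power then lands in the trivial element, giving $\phi^{nk}=\psi^{mk}$. The main obstacle I expect is the discreteness of $\mathrm{Im}(\mu)$: constructing the homothety $\psi_+$ is essentially bookkeeping once one has the duality, but discreteness is where one genuinely needs an integrality or finiteness input from the train track machinery.
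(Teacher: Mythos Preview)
The paper does not give its own proof of this proposition; it is stated with a citation to \cite{BFH97} (Theorem~2.14 and Corollary~2.15 there) and used as a black box. So there is no in-paper argument to compare your proposal against.

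That said, your outline is essentially the strategy of \cite{BFH97}: build the stretch-factor homomorphism $\mu:\mathrm{Stab}(\Lambda(\phi))\to\mathbb{R}_{>0}$ via the induced homothety on $T_+$, show the kernel is finite, and show the image is discrete (hence infinite cyclic, since $\mu(\phi)=\lambda(\phi)>1$). You are right that discreteness is the genuine content. The Perron-number heuristic you sketch is close in spirit, but as written it has a gap: to invoke ``positive integer transition matrices of bounded size'' for an arbitrary $\psi\in\mathrm{Stab}(\Lambda(\phi))$ with $\mu(\psi)\neq 1$, you must first know that such a $\psi$ admits an irreducible train track representative, i.e.\ that $\psi$ is itself fully irreducible with attracting lamination $\Lambda(\phi)$. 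In \cite{BFH97} this is exactly what is established en route, and once it is in hand your bounded-degree algebraic-integer argument (or the equivalent observation that expansion factors in $\mathrm{Out}(F_n)$ are bounded away from~$1$) gives discreteness. Your treatment of the kernel via the finiteness of branch-point orbits and rigidity of equivariant isometries is also in line with the original.
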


For every principal automorphism $\Phi$, let $L(\Phi)$ be the set of leaves $\{P_1, P_2\}$ of $\Lambda(\phi)$ 
with $P_1, P_2 \in \nfix{\hat{\Phi}}$.
An element of $L(\Phi)$ is called {\em a singular leaf} associated to $\Phi$. 
We define \emph{the component of the ideal Whitehead graph determined by $\Phi$}, denoted $W(\Phi)$,
to be the graph with one vertex for each point in $\nfix{\hat{\Phi}}$, and two vertices corresponding to
$P_1, P_2 \in \nfix{\hat{\Phi}}$ are connected by an edge if there is a leaf $\{P_1, P_2\} \in L(\Phi)$. 
The \emph{ideal Whitehead graph} $\wt{\mathcal{IW}}(\phi)$ is defined as 
the disjoint union of components $W(\Phi)$, 
one for each principal automorphism $\Phi$ representing $\phi$.
$F(\phi)$ acts on $\wt{\mathcal{IW}}(\phi)$ isometrically and cofinitely.
We denote by $\mathcal{IW}(\phi)$ the quotient $\wt{\mathcal{IW}}(\phi)/F(\phi)$.
We remark that some components of $\wt{\mathcal{IW}}(\phi)$ may have vertices of valence 1.
For a component $W$ of $\wt{\mathcal{IW}}(\phi)$, we call a non-trivial graph automorphism on $W$ a {\em symmetry}.


Suppose that $b \in T_+$ is a branch point, 
$\Phi \in PA(\phi)$ is the principal automorphism corresponding to $b$ by Lemma \ref{structure of T}, 
and $\breve{g} : \wt{G} \to \wt{G}$ is the corresponding principal lift. 
Each singular leaf $\ell$ in $L(\Phi)$ passes through some 
$x \in \fix{\breve{g}}$, and so $f_g(\ell)$, 
the realization of $\ell$ in $T_+$, passes through the point $f_g(x) = b$.
We call these {\em the singular leaves at $b$}. 
Each singular leaf at $b$ is divided by $b$ into singular rays at $b$ 
which are the images under $f_g$ of the singular rays at some $x \in \fix{\breve{g}}$.
We define a graph $W(z;T_{+})$ as the graph with one vertex for each direction $d_i$ at $z$ and 
with an edge connecting $d_i$ to $d_j$ if the turn $(d_i,d_j)$ is taken by 
the realization of some leaf of $\Lambda_-$ in $T_+$. 
It turns out that the union of such $W(z;T_{+})$ over all branched points coincides with $\wt{\mathcal{IW}}(\phi)$ \cite[Section 3.3]{HM11}.
Finally, we note that for each $\wt x \in \wt G$ the map $D_{\wt x}f_g$ maps the directions of $\wt G$ at $\wt x$ to the directions of $T_+$ at $b$.

\section{Fibered commensurability}\label{sec.fcms}
\subsection{Definition and basic properties}
We first define a covering relation.
\begin{dfn}\label{defi.cover}
An outer automorphism $\phi_1$
 {\em covers} an outer automorphism $\phi_2$
if there exist 
\begin{itemize}
\item a finite index subgroup $H<F(\phi_2)$ which is isomorphic to $F(\phi_1)$,
\item representatives $\Phi_1\in\phi_1$, $\Phi_2\in\phi_2$, and
\item $k\in\mathbb{N}$
\end{itemize}
such that $\Phi_2^k(H) = H$ and $\Phi_2^k|_H = \Phi_1$.
\end{dfn}
\begin{rem}
The equality $\Phi_2^k|_H = \Phi_1$ must pass through the isomorphism between $H$ and $F(\phi_{1})$, 
however for notational simplicity we omit to write the isomorphism.
\end{rem}
\begin{rem}
Let $\phi$ and $\psi$ be fully irreducible.
If $\psi$ covers $\phi$ then there exist train track representatives 
$g' : G' \to G' , g : G\to G$ of $\psi$ and $\phi$ respectively such that
$G'$ covers $G$ and $g'$ is a lift of $g^{k}$ for some $k\in\mathbb{N}$.
\end{rem}
By using this covering relation, we define the following relation.
\begin{dfn}
Let $\phi_{1}$ and $\phi_{2}$ be outer automorphisms.
Then we say $\phi_{1}>\phi_{2}$ if there exists $k\in\mathbb{N}$
so that $\phi_{1}^{k}$ covers $\phi_{2}^{k}$.
Two outer automorphisms $\phi_1,\phi_2
$ are said to be {\em covering equivalent} if $\phi_{1}>\phi_{2}$ and 
$\phi_{2}>\phi_{1}$.
\end{dfn}

We first show that the relation $>$ is transitive and hence a total order on each commensurability class.
\begin{prop}\label{prop.covering-transitive}
	Let $\phi_1$, $\phi_2$ and $\phi_3$ be outer automorphisms. 
	Suppose $\phi_1>\phi_2$ and $\phi_2>\phi_3$, 
	then $\phi_1>\phi_3$.
\end{prop}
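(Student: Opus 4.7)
The plan is to unpack both hypotheses, align them by raising to a common power, and absorb the resulting inner-automorphism ambiguity into a modified representative of a power of $\phi_3$.

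First I would expand the definitions. From $\phi_1 > \phi_2$ I extract a finite-index subgroup $H_1 < F(\phi_2)$ isomorphic to $F(\phi_1)$, representatives $\Phi_1 \in \phi_1^{k_1}$, $\Phi_2 \in \phi_2^{k_1}$, and $n_1 \in \N$ with $\Phi_2^{n_1}(H_1) = H_1$ and $\Phi_2^{n_1}|_{H_1} = \Phi_1$. Likewise $\phi_2 > \phi_3$ yields a finite-index $H_2 < F(\phi_3)$ isomorphic to $F(\phi_2)$, representatives $\Phi_2' \in \phi_2^{k_2}$, $\Phi_3 \in \phi_3^{k_2}$, and $n_2 \in \N$ with $\Phi_3^{n_2}(H_2) = H_2$ and $\Phi_3^{n_2}|_{H_2} = \Phi_2'$. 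Identifying $F(\phi_2)$ with $H_2$, I regard $H_1 < H_2 < F(\phi_3)$; by multiplicativity of index, $H_1$ is finite-index in $F(\phi_3)$ and still isomorphic to $F(\phi_1)$, so it is the natural candidate subgroup to witness $\phi_1 > \phi_3$.

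The central step is the alignment. The automorphisms $(\Phi_2')^{k_1 n_1}$ and $\Phi_2^{k_2 n_1}$ of $H_2 \cong F(\phi_2)$ both represent the outer automorphism $\phi_2^{k_1 k_2 n_1}$, so they differ by an inner automorphism of $H_2$: there exists $\gamma \in H_2$ with $(\Phi_2')^{k_1 n_1} = i_\gamma \circ \Phi_2^{k_2 n_1}$. Because $\gamma$ lies in $H_2 \subset F(\phi_3)$, the inner automorphism $i_\gamma$ of $F(\phi_3)$ preserves $H_2$, so with $K := k_1 k_2 n_1 n_2$ the twisted map $\wt{\Phi}_3 := i_{\gamma^{-1}} \circ \Phi_3^{n_2 k_1 n_1}$ still represents $\phi_3^K$, preserves $H_2$, and restricts there to $\Phi_2^{k_2 n_1}$. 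Since $\Phi_2^{n_1}$ preserves $H_1$ with restriction $\Phi_1$, iterating shows $\wt{\Phi}_3(H_1) = H_1$ and $\wt{\Phi}_3|_{H_1} = \Phi_1^{k_2}$.

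To conclude, I would take the $(n_1 n_2)$-th power: $\wt{\Phi}_3^{n_1 n_2}$ preserves $H_1$ and restricts to $\Phi_1^{k_2 n_1 n_2}$, a representative of $\phi_1^K$. Combining this with $\wt{\Phi}_3$ itself (a representative of $\phi_3^K$) and covering exponent $k = n_1 n_2$ supplies the covering data showing that $\phi_1^K$ covers $\phi_3^K$, whence $\phi_1 > \phi_3$. The main subtlety — and what I expect to be the only real obstacle — is the inner-automorphism correction by $\gamma$: one must ensure that $\gamma$ can be chosen inside $H_2$, so that $i_\gamma$ respects the nested subgroup structure. This is exactly what holds, because two representatives of the same outer automorphism of $H_2 \cong F(\phi_2)$ differ by an inner automorphism of $H_2$ itself, not merely of the ambient $F(\phi_3)$.
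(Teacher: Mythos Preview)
Your proof is correct and follows essentially the same approach as the paper: unpack both covering relations, raise to a common power so that the two representatives of powers of $\phi_2$ align up to an inner automorphism, and then exhibit the composite covering data. The only cosmetic difference is that you absorb the inner correction $i_{\gamma^{-1}}$ into the $\phi_3$-side representative (which works immediately since $\gamma\in F(\phi_2)\cong H_2\subset F(\phi_3)$), whereas the paper absorbs it on the $\phi_1$-side and therefore needs an extra pass to a further power to force the correcting element into the smaller subgroup; your bookkeeping is slightly cleaner but the underlying argument is the same.
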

\begin{proof}
By definition, we see that if $\psi'$ covers $\psi$, then $(\psi')^{k}$ covers $\psi^{k}$.
Hence it suffices to prove that if $\phi_{1}$ covers $\phi_{2}$ and $\phi_{2}$ covers $\phi_{3}$, 
then  $\phi_{1}^{\ell}$ covers $\phi_{3}^{\ell}$ for some $\ell\geq 1$.
In this case we have finite index subgroups $F(\phi_{1})\cong H_{1}<F(\phi_{2})$ and $F(\phi_{2})\cong H_{2}<F(\phi_{3})$, and representatives $\Phi_{1}\in\phi_{1}$, $\Phi_{2},\Phi_{2}'\in\phi_{2}$, and $\Phi_{3}\in\phi_{3}$ such that 
\begin{itemize}
\item $\Phi_{2}(H_{1}) = H_{1}$ and $\Phi_{2}^{k_{1}}|_{H_{1}} = \Phi_{1}$, and
\item $\Phi_{3}(H_{2}) = H_{2}$ and $\Phi_{3}^{k_{2}}|_{H_{2}} = \Phi_{2}'$
\end{itemize}
	Note that if $\Phi'=i_{\eta}\Phi$, then $(\Phi')^{n}=i_{\eta_{n}}\Phi^{n}$ for $\eta_{n}=\eta\Phi(\eta)\Phi^{2}(\eta)\cdots \Phi^{n}(\eta)$.
	Since $H_{1}$ is of finite index, some power $(\Phi_{2}')^{\ell}$ satisfy $(\Phi_{2}')^{\ell}(H_{1}) = H_{1}$ and 
	$(\Phi_{2}')^{\ell} = i_{\delta}\Phi_{2}^{\ell}$ for some $\delta\in H_{1}$.
	Thus we see that $\Phi_{3}^{\ell k_{2}}(H_{1}) = H_{1}$ and  $\Phi_{3}^{\ell k_{1}k_{2}}|_{H_{1}} = (\Phi_{2}')^{\ell k_{1}}|_{H_{1}} = (\Phi_{1}')^{\ell}$ for some $\Phi_{1}'\in \phi_{1}$.
	Hence $\phi_{1}^{\ell}$ covers $\phi_{3}^{\ell}$.
\end{proof}

Note that if $\phi_1>\phi_2$ and $\phi_2>\phi_1$ then we have $F(\phi_{1}) = F(\phi_{2})$ 
by the rank of subgroups in the definition of coverings.
For rotationless fully irreducible case, the covering equivalence is no more than being conjugate.

\begin{prop}\label{prop.covering-equiv}
	Let $\phi_1, \phi_2$ be rotationless fully irreducible outer automorphisms with $F(\phi_{1}) = F(\phi_{2})=:F$.
	If $\phi_1$ and $\phi_2$ are covering equivalent, then there exist 
	$\Phi_1 \in \phi_1$, $\Phi_2 \in \phi_2$ and $\Phi_3 \in \mathrm{Aut}(F)$
	such that $\Phi_3^{-1}   \Phi_1   \Phi_3 = \Phi_2$.
\end{prop}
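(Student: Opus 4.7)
The plan is to unpack the covering equivalence into an explicit conjugation relation between powers of $\phi_{1}$ and $\phi_{2}$ in $\mathrm{Out}(F)$, and then to descend from that power-level relation to a statement about $\phi_{1}$ and $\phi_{2}$ themselves.

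First, because $F(\phi_{1})=F(\phi_{2})=F$ has some fixed rank $n\geq 2$, Schreier's index formula forces any finite-index subgroup of $F$ that is isomorphic to $F$ to have index $1$, and hence to equal $F$ itself. So when unpacking $\phi_{1}>\phi_{2}$ the subgroup $H$ in the covering definition is $F$, and the given isomorphism $F(\phi_{1})\cong H$ is an automorphism $\iota\in\mathrm{Aut}(F)$. Writing $m,k\in\mathbb{N}$ and $\Phi_{1}\in\phi_{1}^{m}$, $\Phi_{2}\in\phi_{2}^{m}$ for the data, the equation $\Phi_{2}^{k}|_{H}=\Phi_{1}$ reads, through $\iota$, as $\Phi_{2}^{k}=\iota\Phi_{1}\iota^{-1}$ in $\mathrm{Aut}(F)$, so in $\mathrm{Out}(F)$ we have
\[
[\iota]\,\phi_{1}^{m}\,[\iota]^{-1}=\phi_{2}^{mk},
\]
where $[\iota]$ denotes the outer class of $\iota$. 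Symmetrically, $\phi_{2}>\phi_{1}$ produces $m',k'\in\mathbb{N}$ and $\iota'\in\mathrm{Aut}(F)$ with $[\iota']\,\phi_{2}^{m'}\,[\iota']^{-1}=\phi_{1}^{m'k'}$.

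Next, conjugation in $\mathrm{Out}(F)$ preserves the stretch factor and $\lambda(\phi^{j})=\lambda(\phi)^{j}$, so these relations give $\lambda(\phi_{1})=\lambda(\phi_{2})^{k}$ and $\lambda(\phi_{2})=\lambda(\phi_{1})^{k'}$; substituting, $\lambda(\phi_{1})=\lambda(\phi_{1})^{kk'}$, and since $\lambda(\phi_{1})>1$ this forces $kk'=1$ and hence $k=k'=1$. Consequently $[\iota]\,\phi_{1}^{m}\,[\iota]^{-1}=\phi_{2}^{m}$ in $\mathrm{Out}(F)$. Setting $\psi:=[\iota]\,\phi_{1}\,[\iota]^{-1}$, we obtain a rotationless fully irreducible outer automorphism of $F$ with $\psi^{m}=\phi_{2}^{m}$. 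Once we establish $\psi=\phi_{2}$ in $\mathrm{Out}(F)$, the proposition follows by picking any $\Phi_{1}\in\phi_{1}$ and taking $\Phi_{3}:=\iota$ and $\Phi_{2}:=\iota\Phi_{1}\iota^{-1}\in\phi_{2}$.

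The main obstacle is therefore this final uniqueness step: for rotationless fully irreducible $\psi$ and $\phi_{2}$ on $F$ with $\psi^{m}=\phi_{2}^{m}$, why must $\psi=\phi_{2}$? Because the stable lamination is unchanged under nonzero powers, $\Lambda(\psi)=\Lambda(\phi_{2})$, so both $\psi$ and $\phi_{2}$ lie in the virtually cyclic group $\mathrm{Stab}(\Lambda(\phi_{2}))$ of Proposition~\ref{prop.BFH2.16}. My plan is to argue that under the rotationless hypothesis this stabilizer is torsion-free infinite cyclic, generated by some $\phi_{0}$: any element outside the maximal infinite cyclic subgroup $\langle\phi_{0}\rangle$ would either invert $\phi_{0}$ (ruled out, since preserving the stable lamination cannot swap it with the unstable one) or realize a finite-order rotational symmetry of $\phi_{2}$ (excluded by the rotationless condition). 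Then $\psi=\phi_{0}^{a}$ and $\phi_{2}=\phi_{0}^{b}$, and the equality $\psi^{m}=\phi_{2}^{m}$ gives $am=bm$, hence $a=b$ and $\psi=\phi_{2}$. Turning this torsion-freeness claim---tying the ``rotational'' elements of $\mathrm{Stab}(\Lambda(\phi_{2}))$ to the rotationless definition from Section~\ref{sec.pre}---into a rigorous argument is the delicate technical point I expect to have to pin down carefully.
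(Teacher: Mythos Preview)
Your reduction matches the paper's exactly through the stretch-factor step yielding $k=k'=1$; the divergence is entirely in how you pass from the power relation to the first-power conclusion.

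The paper does not go through $\mathrm{Stab}(\Lambda)$ for this. It works in $\mathrm{Aut}(F)$ with principal automorphisms: since $\phi_{1},\phi_{2}$ are rotationless, one chooses $\Phi_{i}\in\mathrm{PA}(\phi_{i})$ with $\Phi_{i}^{\ell}=\Phi_{i,\ell}$ and $\mathrm{Fix}_{\mathrm{N}}(\Phi_{i})=\mathrm{Fix}_{\mathrm{N}}(\Phi_{i}^{\ell})$, so the covering relation becomes $\Phi_{2}^{\ell}=(\Psi^{-1}\Phi_{1}\Psi)^{\ell}$ in $\mathrm{Aut}(F)$. The paper then invokes \cite[Corollary~2.9]{HM11} --- distinct principal automorphisms of a fully irreducible outer automorphism have disjoint $\mathrm{Fix}_{\mathrm{N}}$-sets --- to conclude $\Phi_{2}=\Psi^{-1}\Phi_{1}\Psi$ directly. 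The rotationless hypothesis enters precisely to guarantee the equalities $\mathrm{Fix}_{\mathrm{N}}(\Phi_{i})=\mathrm{Fix}_{\mathrm{N}}(\Phi_{i}^{\ell})$, so that the two $\ell$-th roots share a nonempty $\mathrm{Fix}_{\mathrm{N}}$-set.

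Your alternative route via the virtually cyclic group $\mathrm{Stab}(\Lambda(\phi_{2}))$ is conceptually reasonable, but the step you flag as delicate is a genuine gap as written. The rotationless condition asserts that $\Phi\mapsto\Phi^{k}$ is surjective from $\mathrm{PA}(\phi_{2})$ onto $\mathrm{PA}(\phi_{2}^{k})$; it is a statement about \emph{lifts of $\phi_{2}$}, not about arbitrary elements of $\mathrm{Stab}(\Lambda)$, and it does not transparently exclude a finite-order $\theta\in\mathrm{Stab}(\Lambda)$ (which would produce two distinct $m$-th roots of $\phi_{2}^{m}$, as in the model $\mathbb{Z}\times\mathbb{Z}/2$). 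Making your sketch rigorous would require showing that such a $\theta$ acts nontrivially on the branch points of $T_{+}$ in a way incompatible with the bijection $\mathrm{PA}(\phi_{2})\leftrightarrow\{\text{branch points}\}$ of Lemma~\ref{structure of T}(3) --- at which point you are essentially reproving the $\mathrm{Fix}_{\mathrm{N}}$ criterion the paper cites. The paper's route is shorter because it appeals to that criterion directly rather than analysing the global structure of $\mathrm{Stab}(\Lambda)$.
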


\begin{proof}
	Since $\phi_1^\ell$ covers $\phi_2^\ell$ for some $\ell \in \N$, 
	we have representatives $\Phi_{1,\ell} \in \phi_1^\ell$, 
	$\Phi_{2,\ell} \in \phi_2^\ell$ and an automorphism $\Psi \in \mathrm{Aut}(F)$ so that 
	$\Phi_{2,\ell}^{k} = \Psi^{-1}   \Phi_{1,\ell} \Psi$ 
	for some $k\in \N$.
	As $\Psi\circ i_{\gamma} = i_{\Psi(\gamma)}\circ\Psi$ and 
	principal automorphisms are defined by the number of non-repelling fixed points on $\partial F$,
	we may suppose both $\Phi_{2,\ell}^{k}$ and $\Phi_{1,\ell}$ are principal.
	Let $k' \in \N$ be the number in the definition that $\phi_2^{\ell'}$ covers $\phi_1^{\ell'}$ for some $\ell' \in \N$. 
	Let $\lambda_1$ and $\lambda_2$ are stretch factors of $\phi_1$ and $\phi_2$ respectively.
	Then we have $\lambda_1^{k'} = \lambda_2$ and $\lambda_2^k = \lambda_1$, and hence $k = k' =1$.
	Since $\phi_1$ and $\phi_2$ are rotationless, we can take $\Phi_1 \in \pa(\phi_1)$ and $\Phi_2 \in \pa(\phi_2)$
	so that $\Phi_i^\ell = \Phi_{i,\ell}$ and $\fixn(\Phi_i) = \fixn(\Phi_{i,\ell})$ ($i = 1,2$).
	Thus we have 
	\begin{equation}\label{eqn:1}
		\Phi_2^\ell = \Psi^{-1}   \Phi_1^\ell   \Psi \left(\iff \Phi_2^\ell = (\Psi^{-1}   \Phi_1   \Psi)^\ell\right).
	\end{equation}
  	On the other hand, for a fully irreducible outer automorphism $\phi$, 
	two elements $\Phi_1, \Phi_2 \in \pa(\phi)$ are distinct only if 
	$\fixn(\Phi_1) \cap \fixn(\Phi_2) = \emptyset$ (see \cite[Corollary 2.9]{HM11}).
	Together with this, the equation \eqref{eqn:1} gives 
	\[
		\Phi_2 = \Psi^{-1}   \Phi_1   \Psi.
	\]
	Letting $\Phi_3:= \Psi$ we have the conclusion.
\end{proof}

We use the following lemma at several places.
\begin{lem}\label{lem.auto-f.i.}
	Let $\Phi, \Psi$ be automorphisms on a free group $F$.
	If $\Phi$ and $\Psi$ coincide on a finite index subgroup $H < F$ and $\Phi(H) = \Psi(H) = H$
	then $\Phi = \Psi$. 
\end{lem}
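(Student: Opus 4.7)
The plan is to consider the automorphism $\alpha := \Psi^{-1} \circ \Phi \in \mathrm{Aut}(F)$ and show that it is the identity; this immediately gives $\Phi = \Psi$. By hypothesis, for every $h \in H$ we have $\Phi(h) = \Psi(h)$, hence $\alpha(h) = \Psi^{-1}(\Psi(h)) = h$, so $\alpha$ fixes $H$ pointwise.

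Next I would upgrade ``fixes $H$ pointwise'' to ``fixes all of $F$ pointwise''. Let $g \in F$ be arbitrary. Because $[F:H] < \infty$, the cyclic subgroup $\langle g\rangle$ acts on the finite coset space $F/H$, so some positive power $g^n$ lies in $H$. Applying $\alpha$ and using that it is a homomorphism fixing $H$ pointwise, we obtain $\alpha(g)^n = \alpha(g^n) = g^n$.

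The crucial point is then the uniqueness of roots in a free group: if $a^n = b^n$ with $n \geq 1$ and $a^n \neq 1$, then $b$ lies in the centralizer of $a^n$, which is cyclic (centralizers of nontrivial elements in a free group are cyclic); writing $a,b$ as powers of a common generator of that centralizer quickly forces $a = b$. The case $a^n = 1$ is handled by torsion-freeness. Applying this to $a = \alpha(g)$ and $b = g$ gives $\alpha(g) = g$.

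Since $g$ was arbitrary, $\alpha = \mathrm{id}_F$, and consequently $\Phi = \Psi$. The only subtlety is the uniqueness of roots, but this is a standard consequence of the structure of centralizers in free groups, and there is no serious obstacle. I note that the extra hypothesis $\Phi(H) = \Psi(H) = H$ is not actually needed for this argument, since the equality $\Phi|_H = \Psi|_H$ alone implies that $\alpha$ fixes $H$ pointwise.
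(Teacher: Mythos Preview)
Your proof is correct and cleaner than the paper's. Both arguments ultimately hinge on the fact that centralizers of nontrivial elements in a free group are infinite cyclic, but they deploy it differently. The paper passes to a finite-index subgroup $N\le H$ that is \emph{normal in $F$}, picks coset representatives $a_1,\dots,a_m$ for $H$ in $F$, and uses normality to get $a_i x a_i^{-1}\in N\subset H$ for all $x\in N$; applying $\Phi$ and $\Psi$ then shows that $\Psi(a_i)^{-1}\Phi(a_i)$ commutes with all of $\Phi(N)$, a free group of rank $\ge 2$, forcing it to be trivial. Your route instead sets $\alpha=\Psi^{-1}\Phi$, observes that $\alpha$ fixes the finite-index subgroup $H$ pointwise, and then invokes uniqueness of $n$-th roots in free groups to conclude $\alpha(g)=g$ from $\alpha(g)^n=g^n$.

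Your argument is shorter, avoids the auxiliary normal subgroup, and, as you note, does not use the hypothesis $\Phi(H)=\Psi(H)=H$; it also does not rely on $F$ having rank $\ge 2$ (which the paper's proof implicitly uses when asserting $N$ is nonabelian). The paper's approach, on the other hand, generalizes verbatim to any group in which nontrivial elements have abelian centralizers (e.g.\ torsion-free hyperbolic groups), whereas your argument additionally needs unique extraction of roots---a slightly stronger property, though one that also holds in such groups.
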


\begin{proof}
	Recall that any finite index subgroup of a free group contains a normal subgroup of finite index. 
	Let $N$ be such a normal subgroup of $H$.
	Then $N$ is a free group of rank $\geq 2$.
	We consider representatives $a_1, \ldots, a_m$ of cosets of $H$ in $F$.
	It suffices to show that $\Phi(a_i) = \Psi(a_i)$ for each $i = 1,2,\ldots,m$.
	
	Fix $i \in \{1,2,\ldots,m\}$ and choose $x \in N < H$ arbitrary.
	By our assumption $\Phi(x) = \Psi(x) \in H$, we denote this element by $y$.
	Since $N$ is normal, $a_i x a_i^{-1}\in N \subset H$. Hence 
	\[
		\Psi(a_i) y \Psi(a_i)^{-1}=\Psi(a_i x a_i^{-1}) = \Phi(a_i x a_i^{-1}) = \Phi(a_i) y \Phi(a_i)^{-1}.
	\]
	This implies that $\Psi(a_i)^{-1}\Phi(a_i)$ commutes with any element in $\Phi(N) = \Psi(N)$.
	Since $F$ and $N$ are free, we deduce that $\Psi(a_i)^{-1}\Phi(a_i)$ is trivial.
\end{proof}

\begin{dfn}
Two outer automorphisms $\phi_1$ and $\phi_2$ are said to be {\em (fibered) commensurable}, denoted $\phi_1\sim\phi_2$,
 if there is a third outer automorphism $\phi_{3}$ such that $\phi_{3}>\phi_{1}$ and $\phi_{3}>\phi_{2}$.
\end{dfn}
In the next proposition, we justify the notation $\phi_1\sim\phi_2$.
\begin{prop}
Let $\phi_1$, $\phi_2$ and $\phi_3$ be outer automorphisms.
Suppose $\phi_1\sim\phi_2$ and $\phi_2\sim\phi_3$.
Then $\phi_1\sim\phi_3$.
\end{prop}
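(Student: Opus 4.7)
The plan is to reduce the proposition to a common-cover property for the relation $>$: namely, if $\psi_{1}>\phi$ and $\psi_{2}>\phi$, then there exists $\psi$ with $\psi>\psi_{1}$ and $\psi>\psi_{2}$. Granting this, transitivity of $\sim$ follows quickly. Given $\psi_{12}$ witnessing $\phi_{1}\sim\phi_{2}$ and $\psi_{23}$ witnessing $\phi_{2}\sim\phi_{3}$, one has $\psi_{12}>\phi_{2}$ and $\psi_{23}>\phi_{2}$; the common-cover property applied at $\phi_{2}$ produces a $\psi$ with $\psi>\psi_{12}$ and $\psi>\psi_{23}$, and then Proposition \ref{prop.covering-transitive} applied to the chains $\psi>\psi_{12}>\phi_{1}$ and $\psi>\psi_{23}>\phi_{3}$ yields $\psi>\phi_{1}$ and $\psi>\phi_{3}$, so $\phi_{1}\sim\phi_{3}$.

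To establish the common-cover property, I would fix finite-index embeddings $\iota_{1}: F(\psi_{1})\hookrightarrow F(\phi)$ and $\iota_{2}: F(\psi_{2})\hookrightarrow F(\phi)$ supplied by the hypotheses $\psi_{i}>\phi$, together with representatives $\Phi$ of some power of $\phi$ and $\Psi_{i}$ of the corresponding powers of $\psi_{i}$, and exponents $a,b\in\N$ such that $\Phi^{a}$ preserves $\iota_{1}(F(\psi_{1}))$ and restricts there to the $\iota_{1}$-pushforward of $\Psi_{1}$, and $\Phi^{b}$ does the analogous thing through $\iota_{2}$. Then I would set $H:=\iota_{1}(F(\psi_{1}))\cap \iota_{2}(F(\psi_{2}))$, which is of finite index in $F(\phi)$ and hence in each $\iota_{i}(F(\psi_{i}))$. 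Because there are only finitely many subgroups of $F(\phi)$ of a given finite index, a sufficiently large common multiple $c$ of $a$ and $b$ makes $\Phi^{c}$ preserve $H$; the restriction $\Psi:=\Phi^{c}|_{H}$ then defines an outer automorphism $\psi$ on the free group $H$. Since $H$ has finite index in each $\iota_{i}(F(\psi_{i}))$ and $\Phi^{c}|_{\iota_{i}(F(\psi_{i}))}$ is a power of the $\iota_{i}$-pushforward of $\Psi_{i}$, restricting one step further to $H$ exhibits $\psi>\psi_{i}$ for $i=1,2$.

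The main obstacle I expect is the bookkeeping of inner-automorphism ambiguities, a difficulty already on display in the proof of Proposition \ref{prop.covering-transitive}. Representatives of outer automorphisms are determined only up to inner twists, and inner automorphisms of $F(\phi)$ need not preserve the (typically non-normal) finite-index subgroups $\iota_{i}(F(\psi_{i}))$ or $H$; moreover, the two automorphisms "$\Phi^{a}$" and "$\Phi^{b}$" coming from the two invocations of the definition of $>$ are only congruent modulo inner automorphisms, not literal iterates of a single $\Phi$. I intend to handle this precisely as in the proof of Proposition \ref{prop.covering-transitive}: pass to a further common power so that any inner correction $i_{\delta}$ has its conjugator $\delta$ lying in $H$, and then invoke Lemma \ref{lem.auto-f.i.} to upgrade equality on $H$ to the genuine equality of automorphisms needed to verify the covering relations. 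Once this cleanup is done, the construction of $H$ and $\Psi$ delivers a bona fide common cover, and the reduction described above closes out transitivity of $\sim$.
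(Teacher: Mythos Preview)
Your proposal is correct and follows essentially the same route as the paper: intersect the two finite-index subgroups inside $F(\phi_{2})$, pass to a common power preserving the intersection, obtain a common upper bound, and then invoke Proposition~\ref{prop.covering-transitive}. The paper carries this out directly rather than isolating a separate ``common-cover'' lemma, and it handles the inner-automorphism bookkeeping by the same power-taking trick you cite from Proposition~\ref{prop.covering-transitive}; Lemma~\ref{lem.auto-f.i.} is not actually invoked in the paper's argument here, so you may find you do not need it either.
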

\begin{proof}
Let $\phi_{i,i+1}$ be an outer automorphism which satisfies $\phi_{i,i+1}>\phi_i$ and $\phi_{i,i+1}>\phi_{i+1}$ for $i=1,2$.
Then in $F(\phi_2)$, there are corresponding finite index subgroups $H_{1,2}$ and $H_{2,3}$ and representatives $\Phi_{1,2}$ and $\Phi_{2,3}$ of some power of $\phi_{2}$.
By the Nielsen-Schreier theorem, $H:=H_{1,2}\cap H_{2,3}$ is a finite index free subgroup.
Hence, there exist $k_{1,2},k_{2,3}\in\mathbb{N}$ such that
\begin{itemize}
\item there exists $\delta\in F(\phi_{2})$ such that $\Phi_{1,2}^{k_{1,2}} = i_{\delta} \Phi_{2,3}^{k_{2,3}}$, and
\item $\Phi_{1,2}^{k_{1,2}}(H) = H$ and $\Phi_{2,3}^{k_{2,3}}(H) = H$.
\end{itemize}
Hence $\phi_{1,2,3}:=[\Phi_{1,2}^{k_{1,2}}] = [\Phi_{2,3}^{k_{2,3}}] \in\mathrm{Out}(H)$ satisfies 
$\phi_{1,2,3}>\phi_{1,2}$ and $\phi_{1,2,3} >\phi_{2,3}$.
By Proposition \ref{prop.covering-transitive}, we see that $\phi_{1,2,3}>\phi_{1}$ and $\phi_{1,2,3}>\phi_{3}$.
\end{proof}

From now on we only consider covering equivalence classes and by abuse of notation, 
we simply denote by $\phi\in\out{n}$ the equivalence class.

Now we collect properties invariant under taking commensurability.
\begin{prop}\label{stability_of_limits}
Let $\phi$ and $\psi$ be fully irreducible outer automorphisms.
Suppose $\psi>\phi$ and we fix a copy of $F(\psi)$ in $F(\phi)$ given by the definition of the coverings.
Then,
\begin{enumerate}
\item there is an $F(\psi)$-equivariant isometry $f:T_{+}(\phi) \rightarrow T_{+}(\psi)$,
\item ideal Whitehead graphs are isomorphic as graphs, i.e. $\wt{\mathcal{IW}}(\phi) \cong \wt{\mathcal{IW}}(\psi)$,
\item $f(\wt\Lambda(\phi)) = \wt\Lambda(\psi)$ where the map $f$ here is an extension of the map $f$ in (1) to 
	$\mathcal{G}T_{+}(\phi)$, sill denoted by $f$ by abuse of notation. 
\item $\log(\lambda(\phi))/\log(\lambda(\psi))\in\mathbb{N}$.
\end{enumerate}
\end{prop}
\begin{proof}
	Let $g : G \to G$ and $h : G' \to G'$ be affine train track representatives of some powers $\phi^{\ell}$ and $\psi^{\ell}$ respectively such that
	there is a finite covering $p :G' \to G$ and $h : G' \to G'$ is a lift of $g^{k}$ for some $k\in\mathbb{N}$.
	Then we may identify universal coverings of $G$ and $G'$, and after taking iterations, $g$ and $h$ determine the same map on the universal covering.
	Since the attracting tree of $\phi$ is defined by the direct system constructed from the universal cover of $G$ and a lift of a power $g$,
	we see that $\phi$ and $\psi$ define the same attracting trees.
	In this case, the identification of $T_{+}(\phi)$ and $T_{+}(\psi)$ can only be made $F(\psi)$-equivariant.
	Thus (1) follows and we let $T_{+}:=T_{+}(\phi) = T_{+}(\psi)$.
	Note that $T_{+}$ only admit an action by $F(\psi)$.

	Then (2) follows from the characterization of the ideal Whitehead graph in terms of $T_{+}$, see \cite[Section 3.3]{HM11}.
	Now (3) follows as the stable laminations are subsets of $\mathcal{G}T_{+}$ defined by 
	$\Phi_{+}$ and $\Psi_{+}$ in Theorem \ref{associated map on T}, see also \cite[Section 2.8]{HM11}.
	
	For an affine train track representative $g : G\to G$ of a fully irreducible $\phi \in \mathrm{Out}(F(\phi))$,
	the logarithm of the expansion factor $\lambda(\phi)$ is equal to the expansion factor of edges of affine train track representatives.
	Let $\lambda(g)$ and $\lambda(h)$ denote the expansion factor of edges of affine train track representatives given above.
	Then we have $$\log\lambda(\psi) = \log\lambda(h) = \log\lambda(g^{k}) = k\log\lambda(g) = k\log\lambda(\phi).$$
\end{proof}

\subsection{Commensurability of atoroidal outer automorphisms}

An outer automorphism $\phi$ is \emph{toroidal} if there exists $k \in \N$ and a conjugacy class $[w]$ 
such that $\phi^k([w]) = [w]$.
If $\phi$ is not toroidal, then it is called \emph{atoroidal}.
Note that it is known (see \cite[Section 4]{BH92}) that if $\phi$ is fully irreducible and atoroidal, then $\phi$ is non-geometric.

\begin{prop}\label{prop.atroidal-iwip}
	For outer automorphisms $\phi$ and $\psi$ 
	assume that $\psi > \phi$.
	Then following holds:
	\begin{itemize}
	\item[$(1)$]
	$\phi$ is atoroidal $\iff$ $\psi$ is atoroidal.
	\item[$(2)$]
	$\phi$ is fully irreducible and atoroidal $\iff$ $\psi$ is fully irreducible and atoroidal.
	\end{itemize}
\end{prop}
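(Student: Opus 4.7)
The plan is to first reduce to the case where $\psi$ directly covers $\phi$, then establish the two invariance claims in turn. Since both atoroidality and full irreducibility are invariant under taking positive powers, and $\psi>\phi$ means $\psi^{k}$ covers $\phi^{k}$ for some $k$, I may replace $\phi,\psi$ by $\phi^{k},\psi^{k}$ and fix a finite-index subgroup $H\leq F(\phi)$ isomorphic to $F(\psi)$, representatives $\Phi\in\phi$, $\Psi\in\psi$, and $m\in\N$ with $\Phi^{m}(H)=H$ and $\Phi^{m}|_{H}=\Psi$. All subsequent arguments take place in this setting.

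For (1), the implication ``$\psi$ toroidal $\Rightarrow \phi$ toroidal'' is immediate: if $\Psi^{n}(w)=hwh^{-1}$ with $w,h\in H$, then $\Phi^{mn}$ fixes $[w]$ in $F(\phi)$. For the converse, assume $\Phi^{n}([w])=[w]$ for some nontrivial $w\in F(\phi)$. Since $H$ has finite index, some power $w^{p}$ lies in $H$, and $\Phi^{n}$ still fixes $[w^{p}]_{F(\phi)}$. Replacing $n$ by $\mathrm{lcm}(n,m)$ so that $\Phi^{n}|_{H}$ is a power of $\Psi$, the $F(\phi)$-conjugacy class of $w^{p}$ decomposes into finitely many $H$-conjugacy classes; this power of $\Psi$ permutes them, so a further power fixes one, exhibiting $\psi$ as toroidal.

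For (2), by (1) I only need to transfer full irreducibility in both directions. The direction ``$\psi$ f.i.~$\Rightarrow \phi$ f.i.'' uses Bass--Serre theory: if $\phi$ were reducible, some $\Phi^{n}$ would fix the conjugacy class of a proper free factor $A\leq F(\phi)$, giving a splitting $F(\phi)=A*C$. The subgroup $H$ acts on the associated Bass--Serre tree $T$; since $A,C$ have infinite index in $F(\phi)$ while $H$ has finite index, $H$ cannot fix any vertex, so $H\backslash T$ is a nontrivial graph-of-groups decomposition of $F(\psi)$ with trivial edge groups, producing a proper free factor system of $F(\psi)$. By the finiteness of its orbit under $\Psi$, some power of $\Psi$ preserves this system, contradicting full irreducibility of $\psi$.

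The opposite direction ``$\phi$ f.i.~$\Rightarrow \psi$ f.i.'' is the main obstacle, and I would approach it via the attracting tree: view $T_{+}(\phi)$ as an $H$-tree, which remains minimal and irreducible because $H$ is of finite index in the minimally and irreducibly acting $F(\phi)$, and observe that $\Psi_{+}=\Phi_{+}^{m}$ (from Theorem \ref{associated map on T}) is an $H$-equivariant homothety with stretch factor $\lambda(\phi)^{m}>1$. One then invokes the standard characterization that an atoroidal outer automorphism admitting an irreducible invariant $F$-tree with an expanding equivariant homothety is fully irreducible. Equivalently, one may argue directly through $\Lambda(\phi)$: this lamination is $\Psi$-invariant after identifying $\partial H=\partial F(\phi)$, and remains filling with respect to $F(\psi)$ since any proper free factor of $H$ has infinite index in $F(\phi)$ and so cannot carry $\Lambda(\phi)$; Proposition \ref{prop.BFH2.16} on the virtually cyclic stabilizer then pins down $\psi$ as fully irreducible. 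The hard part is precisely the appeal to such a characterization, i.e.\ passing from ``$\psi$ preserves the attracting data of $\phi$'' to ``$\psi$ itself is fully irreducible''.
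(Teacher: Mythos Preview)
Your argument for (1) and for the direction ``$\psi$ fully irreducible $\Rightarrow$ $\phi$ fully irreducible'' in (2) matches the paper's: the paper quotes the Kurosh subgroup theorem where you invoke Bass--Serre theory, but these are the same mechanism. Your handling of the $F(\phi)$-versus-$H$ conjugacy classes in (1) is in fact more careful than the paper's.

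The substantive divergence is in the direction ``$\phi$ fully irreducible and atoroidal $\Rightarrow$ $\psi$ fully irreducible''. You sketch two routes---via the attracting tree or via the stable lamination---but both terminate in an appeal to an unstated external characterization (``atoroidal with an irreducible invariant tree and expanding homothety $\Rightarrow$ fully irreducible'', or ``preserves a filling lamination $\Rightarrow$ fully irreducible''), and you correctly flag this as the hard part. Neither characterization is established in the paper, and your invocation of Proposition~\ref{prop.BFH2.16} does not close the gap: that proposition assumes full irreducibility rather than concluding it, and the claim that no proper free factor of $H$ carries a leaf of $\Lambda(\phi)$ needs justification, since such a factor is not a priori a free factor of $F(\phi)$.

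The paper avoids this entirely by a different and more concrete device. It uses Kapovich's equivalence (\cite{Kap14}, Proposition~4.4): for an atoroidal outer automorphism, being fully irreducible is equivalent to admitting a \emph{clean} train track representative (primitive transition matrix and connected Whitehead graph). Since $\phi$ is atoroidal and fully irreducible, it has a clean representative $g:G\to G$; by \cite[Lemma~2.1]{BFH97}, any lift $g':G'\to G'$ to the finite cover corresponding to $H$ is again clean. Part (1) gives that $\psi$ is atoroidal, so Kapovich's criterion applied in the other direction yields that $\psi$ is fully irreducible. This bypasses the need for any tree- or lamination-based characterization and is what you should use to complete the argument.
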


\begin{proof}
	Let $H$ be a subgroup of $F(\phi)$ associated with $\psi$ which is isomorphic to $F(\psi)$.
	
	(1) If $\psi$ is toroidal, then some power of $\psi$ fixes a conjugacy class $[w]$ in $F(\psi)$.
	Since $\psi$ coincides with $\phi$ on a subgroup which includes $w$ up to isomorphism,
	$[w]$ must be fixed by some power of $\phi$ and hence $\phi$ is toroidal.
	
	Conversely, assume that $\phi$ is toroidal then there exists a conjugacy class $[w]$ in $F(\phi)$ 
	which is fixed by some power of $\phi$. 
	We may assume that $w^n$ for some $n$ is included in $H$ since $H$ is of finite index.
	Since a power of a representative of $\psi$ coincides with a representative of $\phi$ on $H$, 
	we have $\Psi(w^{n}) = a^{-1}w^{n}a$ for some $a\in F(\phi)$ and $\Psi\in\psi^{\ell}$.
	Then $\Psi^{k}(w^{n}) = (a\Psi(a)\cdots \Psi^{k}(a))^{-1}w^{n}a\Psi(a)\cdots \Psi^{k}(a)$.
	Hence for some $k$ we have $\psi([w^n]) = [w^{n}]$ in $H\cong F(\psi)$.
	This completes the proof of (1).
	
	(2)	
	The``if'' part follows from Kurosh Subgroup Theorem \cite[Theorem 1.10, Chapter IV]{LS}.
	The theorem states that for a free product $F = \Asterisk_i^m A_i \ast B$ of a group $F$, 
	any subgroup $F' < F$ is also represented as a free product
	$F' = \bigast_i^m C_i \ast D$ where each $C_i$ is the intersection of $F'$ with a conjugate of $A_i$.
	Suppose there exists a proper free factor $A$ of $F(\phi)$ whose conjugacy class is preserved by a power of $\phi$.
	Then similarly to the case of (1), there also exists a proper free factor of $H$ (which is the intersection of $H$ with a conjugate of $A$) 
	whose conjugacy class is preserved by a power of $\psi$.
	Hence the fully irreducibility of $\psi$ implies the fully irreducibility of $\phi$.
	 
	The converse is proved by referring Bestvina-Feighn-Handel \cite{BFH97} and Kapovich \cite{Kap14}. 
	Note that in \cite{Kap14}, fully irreducible elements are called iwip.
	Suppose $\phi$ is atoroidal and fully irreducible and $g:G\rightarrow G$ is an affine train track representative.
	In \cite{Kap14}, a train track representative is called {\em clean} if its transition matrix is positive after iteration, and its Whitehead graph is connected 	(we omit the definitions here, see \cite{Kap14} for details).
	In \cite[Proposition 4.4]{Kap14}, it is proved that for an atoroidal element,
	being fully irreducible and having clean train track representative are equivalent.
	Hence we may suppose $g:G\rightarrow G$ is clean.
	Also, by \cite[Lemma 2.1]{BFH97}, any lift $g':G'\rightarrow G'$ corresponding to $F(\psi)<F(\phi)$ is also clean.
	As we know that $\psi$ is atoroidal by (1), again by \cite[Proposition 4.4]{Kap14} we see that $\psi$ is fully irreducible.
\end{proof}


Although it is not directly related to our main theorem,
we discuss ageometric outer automorphisms.

\begin{dfn}
	A fully irreducible outer automorphism $\phi \in \out{n}$ is \emph{ageometric}
	if there exists a train track representative $g : G \to G$ which does not have any periodic Nielsen path.
	We also call such a train track representative without periodic Nielsen path {\em ageometric}.
\end{dfn}

We remark that if an outer automorphism is ageometric then it is atoroidal.
Indeed, if a conjugacy class $[\gamma]$ is preserved by $\phi$, 
then for any train track representative $g : G \to G$ of $\phi$, 
any path written as $\sigma_0 \gamma \sigma_0^{-1}$ is mapped to a path written as $\sigma_1 \gamma \sigma_1^{-1}$.
Thus, in particular, we have $g(\gamma)_\sharp = \gamma$.
This means that the path $\gamma$ is a Nielsen path and hence $\phi$ is not ageometric.

We show that the ageometricity is a commensurability invariant.
To do this, we recall an equivalent definition of ageometricity.
We first recall the notion of geometric index.
Let $T_{+}(\phi)$ be the attracting tree of an atoroidal fully irreducible $\phi\in\mathrm{Out}(F(\phi))$.
For each branched point $b$ of $T_{+}(\phi)$, the degree of $b$ denoted $\mathrm{deg}(b)$ is the number of components of $T_{+}(\phi)\setminus \{b\}$.
For each $b$, let $[b]$ denote the $F(\phi)$ equivalence class.
Then the geometric index of $T_{+}(\phi)$ is
$$\mathrm{ind}_{\mathrm{geod}} (T_{+}(\phi)) := \sum_{[b]:\mathrm{deg}(b)\geq 3}(\mathrm{deg}(b)-2).$$
Then it is known \cite{BF} that atoroidal fully irreducible $\phi$ is ageometric 
if and only if $\mathrm{ind}_{\mathrm{geod}} (T_{+})(\phi) < 2\cdot\mathrm{rank}(F(\phi)) -2$.
\begin{prop}
Let $\phi$ and $\psi$ be atoroidal fully irreducible automorphisms.
Suppose $\psi>\phi$.
Then $\phi$ is ageometric if and only if $\psi$ is ageometric.
\end{prop}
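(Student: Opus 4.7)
The plan is to argue directly from the stated geometric-index criterion, using that the attracting tree is a commensurability invariant (Proposition \ref{stability_of_limits}). Since $\psi>\phi$ realises $F(\psi)$ as a finite-index subgroup $H\leq F(\phi)$ of some index $d\in\N$, both $F(\phi)$ and $F(\psi)=H$ act on the common tree $T := T_+(\phi) = T_+(\psi)$ by the restriction of a single $F(\phi)$-action. The Nielsen-Schreier formula then gives
\[
\mathrm{rank}(F(\psi)) - 1 \;=\; d\bigl(\mathrm{rank}(F(\phi)) - 1\bigr).
\]

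Next I would analyse how orbits of branch points of $T$ change under the passage from $F(\phi)$ to its subgroup $F(\psi)$. Since $\phi$ is atoroidal and fully irreducible, standard structural results on attracting trees of iwip automorphisms guarantee that the $F(\phi)$-action on $T$ has trivial point-stabilisers; hence so does the $F(\psi)$-action. For each branch point $b$ with $\deg(b)\geq 3$, its $F(\phi)$-orbit therefore decomposes into exactly $d$ distinct $F(\psi)$-orbits, all of whose representatives have the same valence $\deg(b)$. Summing $(\deg(b)-2)$ over all orbits yields the scaling identity
\[
\mathrm{ind}_{\mathrm{geod}}(T_+(\psi)) \;=\; d\cdot\mathrm{ind}_{\mathrm{geod}}(T_+(\phi)).
\]

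Combining the two identities with the stated ageometricity criterion would conclude the proof: both the geometric index and the quantity bounding it from above (which is linear in $\mathrm{rank}(F)-1$) scale by the same factor $d$ on passing from $\phi$ to $\psi$, so the defining strict inequality is preserved in both directions. The principal obstacle is verifying the freeness of the $F(\phi)$-action on $T_+(\phi)$ under the atoroidal iwip hypothesis, so that orbit multiplicities are cleanly multiplied by $d$; without freeness one would have to track how point-stabilisers in $F(\phi)$ meet $H$, which would complicate the orbit count. Fortunately atoroidality excludes nontrivial elliptic elements, so the count is clean and the proposition follows.
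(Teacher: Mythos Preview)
Your argument is correct and follows the same route as the paper: identify the attracting trees via Proposition~\ref{stability_of_limits}, then observe that both the geometric index and the rank-dependent bound scale by the covering index $d$, so the ageometricity inequality is preserved in both directions. Your justification of the index scaling via freeness of the $F(\phi)$-action on $T_+$ (a consequence of atoroidality) is more explicit than the paper's, which simply asserts the two scaling identities without further comment.
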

\begin{proof}
By Proposition \ref{stability_of_limits}, we see that $T_{+}(\phi) = T_{+}(\psi)$.
If $F(\psi)$ is a subgroup of $F(\phi)$ of index $m$, then 
\begin{eqnarray}
\mathrm{ind}_{\mathrm{geod}} (T_{+})(\psi) = m\cdot \mathrm{ind}_{\mathrm{geod}} (T_{+})(\phi),\\
 \left(\mathrm{rank}(F(\psi)) -1\right) = m\cdot \left(\mathrm{rank}(F(\phi)) -1\right)
\end{eqnarray}
Hence 
$$\mathrm{ind}_{\mathrm{geod}} (T_{+}(\phi)) < 2\cdot\mathrm{rank}(F(\phi)) -2\iff \mathrm{ind}_{\mathrm{geod}} (T_{+}(\psi)) < 2\cdot\mathrm{rank}(F(\psi)) -2.$$
\end{proof}


\section{Minimal elements}\label{sec.minimal}
\subsection{Topologically minimal element}
By Proposition \ref{prop.atroidal-iwip}, we see that being atoroidal and fully irreducible is a commensurability invariant.
In this subsection, for atoroidal and fully irreducible case, 
we prove that there is a minimum of the rank of free groups in any commensurability class under asymmetry condition on the 
ideal Whitehead graphs.
We call such an outer automorphism defined on the minimum rank free group, a {\em topologically minimal element}.
\begin{lem}\label{lem.top-min}
Let $\phi$ be an atoroidal fully irreducible outer automorphism and 
$[\phi]$ the fibered commensurability class of $\phi$.
Suppose that every component of the ideal Whitehead graph $\wt{\mathcal{IW}}(\phi)$ admits no symmetry.
Then there is an element $\phi_{m}\in[\phi]$ such that for any $\phi'\in[\phi]$, $F(\phi')$ is a finite index subgroup of 
$F(\phi_{m})$.
\end{lem}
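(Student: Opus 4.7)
The plan is to realise $\phi_m$ as the outer automorphism of the largest free subgroup of $\mathrm{Isom}(T_+(\phi))$ that jointly contains every $F(\phi')$ with $\phi'\in[\phi]$. By Proposition \ref{stability_of_limits}, every such $\phi'$ shares the same attracting tree $T_+=T_+(\phi)$ and, up to canonical identifications arising from the covering maps, the same ideal Whitehead graph $\wt{\mathcal{IW}}(\phi)$. Each $F(\phi')$ then embeds in $\mathrm{Isom}(T_+)$, acting freely, minimally, and discretely. I would set
\[
\Gamma_m := \bigl\langle\, F(\phi') : \phi'\in[\phi]\,\bigr\rangle \leq \mathrm{Isom}(T_+),
\]
and prove that $\Gamma_m$ is a finitely generated free group containing each $F(\phi')$ as a finite-index subgroup and supporting a well-defined outer automorphism $\phi_m\in[\phi]$ covered by every $\phi'$; the claim of the lemma is then immediate.

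Finite generation of $\Gamma_m$ follows from the Schreier formula: any free overgroup of $F(\phi)$ of index $k$ has rank $1+(\mathrm{rank}(F(\phi))-1)/k\geq 2$, so $k\leq\mathrm{rank}(F(\phi))-1$. Any ascending chain of finitely generated free subgroups of $\mathrm{Isom}(T_+)$ beginning with $F(\phi)$ therefore stabilises, so $\Gamma_m$ is already generated by finitely many of the $F(\phi')$, and $F(\phi)$ sits inside $\Gamma_m$ with finite index. By the common-cover argument used in the proof that $\sim$ is transitive, every $F(\phi')$ has finite index in $\Gamma_m$ as well. The main obstacle is then to show that $\Gamma_m$ acts freely on $T_+$; once this is granted, $\Gamma_m$ is automatically free as $\pi_1$ of the quotient graph.

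Freeness of the $\Gamma_m$-action is where the no-symmetry hypothesis enters. Suppose $1\neq\gamma\in\Gamma_m$ has a fixed point in $T_+$. The typical case is that $\gamma$ fixes a branch point $b$ (either directly, or because the fix set is a subtree of positive length, in which case density of branch points in $T_+$ produces one): then $\gamma$ induces a graph automorphism of the local Whitehead graph $W(b;T_+)$, which by the discussion of Section \ref{sec.TLW} is a component of $\wt{\mathcal{IW}}(\phi)$. The no-symmetry hypothesis forces this automorphism to be trivial, and by Lemma \ref{structure of T}(2) the vertices of $W(b;T_+)$ are precisely the directions at $b$, so $\gamma$ fixes $b$ and every direction at $b$; since any such isometry of an $\R$-tree is the identity, $\gamma=1$. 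The remaining subcase in which the fix set is a single non-branch point $x$ (so $\gamma$ would have order two, swapping the two half-rays at $x$) is ruled out using that $\gamma$ preserves $\Lambda(\phi)$ and the finite set of $\Gamma_m$-orbits of branch points: an orbit representative close to $x$ on one side must be paired with one on the other side, and tracking this identification against the fixed combinatorics of $\wt{\mathcal{IW}}(\phi)$ again invokes no-symmetry to produce a contradiction.

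With $\Gamma_m$ free and containing each $F(\phi')$ with finite index, I finish by fixing a principal homothety $\Phi_+\colon T_+\to T_+$ representing $\phi$: by Theorem \ref{associated map on T}(1), a suitable power $\Phi_+^N$ normalises every $F(\phi')$ and hence normalises $\Gamma_m$. This induces $\phi_m\in\mathrm{Out}(\Gamma_m)$ whose appropriate power restricts to a power of $\phi$, so $\phi_m\in[\phi]$ and every $\phi'\in[\phi]$ satisfies $\phi'>\phi_m$, giving $F(\phi')$ as a finite-index subgroup of $F(\phi_m)=\Gamma_m$ as required.
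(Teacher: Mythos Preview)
Your strategy is the paper's: embed the $F(\phi')$ in $\mathrm{Isom}(T_+)$, use no-symmetry of $\wt{\mathcal{IW}}$ to show the group they generate acts freely (hence is free), and extend the outer automorphism to this overgroup. The paper works pairwise---combine $F(\phi)$ with one $F(\psi)$, show $\langle F(\phi)\cup F(\psi)\rangle$ is free and carries an element of $[\phi]$, then iterate since rank strictly drops---while you go straight to $\Gamma_m$; and where the paper extends the automorphism by defining $\Theta$ explicitly on words in $F(\phi)\cup F(\psi)$ and checking well-definedness from the trivial point stabilisers, you conjugate by $\Phi_+^N$. These are the same argument reorganised. (Your normalisation claim is correct but deserves a line: $\Phi_+^{n}$ and $\Psi_+^{m}$ differ by an isometry of $T_+$ commuting with the common finite-index subgroup, and minimality of that subgroup's action forces this isometry to be trivial.)

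There is, however, a real gap at the freeness step. The assertion ``$\gamma$ fixes $b$ and every direction at $b$, hence $\gamma=1$'' is false for isometries of $\R$-trees in general: an isometry can fix a point and all incident germs yet act nontrivially further out. What saves you, and what the paper (tersely) uses, is that the vertices of a component of $\wt{\mathcal{IW}}(\phi)$ live in $\partial T_+$, not merely among the directions at $b$; so $\gamma$ fixing the component pointwise fixes at least three boundary points, hence an infinite tripod in $T_+$. From there the paper first deduces finite index of $F(\phi)$ in the overgroup from the free action on the set of components of $\wt{\mathcal{IW}}$, then argues the overgroup is torsion-free (finite-order elements fix points, but branch-point and arc stabilisers have been shown trivial) and invokes Stallings. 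Your Schreier-chain argument for finite generation of $\Gamma_m$ also presupposes that each intermediate $\langle F(\phi),F(\phi'_1),\dots,F(\phi'_k)\rangle$ is already free with $F(\phi)$ of finite index---precisely the pairwise step the paper carries out---so the honest route is to establish that step first and iterate, rather than asserting the global $\Gamma_m$ directly.
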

\begin{proof}
By Proposition \ref{stability_of_limits},
for any $\psi\in[\phi]$, we have 
$\wt{\mathcal{IW}}(\phi) = \wt{\mathcal{IW}}(\psi)=:\wt{\mathcal{IW}}$ and $T_{+}(\phi) = T_{+}(\psi)=:T_{+}$.
Let $\psi\in[\phi]$.
Both $F(\phi)$ and $F(\psi)$ act on $T_{+}$ isometrically.
We first show that in the group of orientation preserving isometries $\mathrm{Isom}^{+}(T_{+})$, 
$\langle F(\phi)\cup F(\psi) \rangle$ contains $F(\phi)$ and $F(\psi)$ as finite index subgroups.
First, we consider the action of $\langle F(\psi)\cup F(\phi) \rangle$ on $\wt{\mathcal{IW}}$.
Note that $F(\phi)$ and $F(\psi)$ act on $\wt{\mathcal{IW}}$ so that 
the stabilizer of each component is trivial and their quotients have finitely many components.
Suppose that there is $\gamma\in\langle F(\psi)\cup F(\phi) \rangle$ which stabilizes a component.
As we have assumed that each component of $\wt{\mathcal{IW}}$ admits no symmetry, 
$\gamma$ must be trivial on the component.
Now we regard $\gamma$ as an isometry on $T_{+}$.
Fixing an component of $\wt{\mathcal{IW}}$ means that $\gamma$ fix a branched point $b$ of $T_{+}$, and 
$\gamma$ acts trivially on the component means that $D\gamma$ fix every direction from $b$.
Hence $\gamma$ must be the identity.
Let $\gamma' F(\phi)$ be a coset in $\langle F(\psi)\cup F(\phi) \rangle$ of $F(\phi)$.
If $\gamma'$ fix some component of $\wt{\mathcal{IW}}/F(\phi)$,
then we see that $\gamma'\delta$ for some $\delta\in F(\phi)$ fixes a component of $\wt{\mathcal{IW}}$.
But this means that $\gamma'\delta$ is identity and hence $\gamma'\in F(\phi)$.
Hence $\gamma'$ fixes no component of $\wt{\mathcal{IW}}/F(\phi)$.
As $\wt{\mathcal{IW}}/F(\phi)$ has only finitely many components, 
we see that $F(\phi)$ is a finite index subgroup of $\langle F(\psi)\cup F(\phi) \rangle$.
By the same argument, we also conclude that $F(\psi)$ is of finite index in $\langle F(\psi)\cup F(\phi) \rangle$.

Recall that any element acting on an $\mathbb{R}$-tree by a finite order must fix a point.
Since every element of $\langle F(\psi)\cup F(\phi) \rangle$ is orientation preserving,
if an element of $\langle F(\psi)\cup F(\phi) \rangle$ fixes a point which is not a branched point, then it must fix some branched point as well.
But we know that there is no non-trivial stabilizer of any branched point.
Therefore we see that $\langle F(\psi)\cup F(\phi) \rangle$ is torsion free and hence $\langle F(\psi)\cup F(\phi) \rangle$ is a free group by \cite[Theorem 0.2]{Sta}.
Since $\phi$ and $\psi$ are commensurable, we have representatives $\Phi,\Psi$ of $\phi,\psi$ respectively and a finite index subgroup $H$ of $F(\phi)$ and $F(\psi)$ such that $\Phi^{n}|_{H} = \Psi^{m}|_{H}$ for some $n,m\in \mathbb{N}$.
Let $\Theta_{+}:T_{+}\rightarrow T_{+}$ be a map induced from $\Phi^{n}|_{H} = \Psi^{m}|_{H}$ by Theorem \ref{associated map on T}.
We set $\Theta(\gamma) = \Phi^{n}(\gamma)$ if $\gamma\in F(\phi)$ and $\Theta(\gamma) = \Psi^{m}(\gamma)$ if $\gamma\in F(\psi)$.
Then we have $\Theta_{+}\circ\gamma = \Theta(\gamma)\circ\Theta_{+}$ for any $\gamma\in F(\phi)\cup F(\psi)$.
Let $\gamma_{1}\cdots\gamma_{n}$ be a concatenation of elements where $\gamma_{i}\in F(\phi)$ if $i$ even and $\gamma_{i}\in F(\psi)$ otherwise.
We define $$\Theta(\gamma_{1}\cdots\gamma_{n}):=\Theta(\gamma_{1})\cdots \Theta(\gamma_{n}).$$
By the above discussion we have 
\begin{equation}\label{eq.Theta}
\Theta_{+}\circ\gamma_{1}\cdots\gamma_{n} = \Theta(\gamma_{1}\cdots\gamma_{n})\circ\Theta_{+}.
\end{equation}

As the stabilizer of any branched point is trivial, we see that
if two concatenations $\gamma_{1}\cdots\gamma_{n}$ and $\gamma'_{1}\cdots\gamma'_{n}$ of elements of $F(\phi)$ and $F(\psi)$ 
represents the same element in $\mathrm{Isom}^{+}(T_{+})$,
the equation (\ref{eq.Theta}) implies
 $\Theta(\gamma_{1}\cdots\gamma_{n}) = \Theta(\gamma'_{1}\cdots\gamma'_{n})$ in $\mathrm{isom}^{+}(T_{+})$.
Therefore $\Theta$ is a well-defined homomorphism on $\langle F(\psi)\cup F(\phi) \rangle$ and
\begin{equation}\label{eq.Theta2}
\Theta|_{F(\phi)} = \Phi^{n} \text { and }\Theta|_{F(\psi)} = \Psi^{m}.
\end{equation}
By (\ref{eq.Theta}), we see that $\Theta$ is injective and since $\Phi^{n}$ and $\Psi^{m}$ are automorphisms,
it follows from (\ref{eq.Theta2}) that $\Theta$ is surjective.
Hence we have $\Theta\in\mathrm{Aut}(\langle F(\psi)\cup F(\phi) \rangle)$.
Moreover, the outer automorphism which is represented by $\Theta$ on $\langle F(\psi)\cup F(\phi) \rangle$ is an element of $[\phi]$ by (5).
Note that the rank of $\langle F(\psi)\cup F(\phi) \rangle$ is strictly smaller than that of $F(\phi)$ or $F(\psi)$, 
unless $\phi>\psi$ or $\psi>\phi$.
Therefore, by repeating the procedure above, we obtain an element $\phi_{m}\in[\phi]$ such that 
$F(\phi_{m})$ contains $F(\phi')$ as a finite index subgroup for any $\phi'\in[\phi]$.
\end{proof}
\begin{cor}\label{cor.covered by all}
Suppose the same assumption as Lemma \ref{lem.top-min}, and let $\phi_{m}$ be an element given by Lemma \ref{lem.top-min}.
Then every element $\psi\in[\phi]$ has a positive power such that $\psi^{n}>\phi_{m}$.
\end{cor}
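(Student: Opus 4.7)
The plan is to apply the construction used to prove Lemma \ref{lem.top-min} to the pair $(\psi,\phi_{m})$ and to exploit the fact that $F(\phi_{m})$ already absorbs $F(\psi)$. By Lemma \ref{lem.top-min}, $F(\psi)$ embeds as a finite-index subgroup of $F(\phi_{m})$ inside $\mathrm{Isom}^{+}(T_{+})$, so the combined group $\langle F(\psi)\cup F(\phi_{m})\rangle$ that appeared in that proof is in fact $F(\phi_{m})$ itself. This is the key reduction that turns the symmetrization construction into a covering relation.

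Since $\psi$ and $\phi_{m}$ are commensurable, the argument in the proof of Lemma \ref{lem.top-min} supplies representatives $\Psi\in\psi$ and $\Phi_{m}\in\phi_{m}$ together with positive integers $N$ and $n$ such that $\Psi^{N}$ and $\Phi_{m}^{n}$ agree on a common finite-index subgroup of $F(\psi)\cap F(\phi_{m})$. The induced isometry of $T_{+}$ then extends, by the same argument, to an automorphism $\Theta$ of $\langle F(\psi)\cup F(\phi_{m})\rangle=F(\phi_{m})$ with $\Theta=\Phi_{m}^{n}$ on all of $F(\phi_{m})$ and $\Theta|_{F(\psi)}=\Psi^{N}$. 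In particular $\Theta$ represents $\phi_{m}^{n}$, it preserves $F(\psi)$, and its restriction to $F(\psi)$ is $\Psi^{N}$.

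It now suffices to translate this into the definitions of covering and of $>$. Set $H=F(\psi)\subset F(\phi_{m})$, take $\Phi_{2}=\Theta\in\phi_{m}^{n}$, and use the exponent $k=n$ in the definition of covering. Then $\Phi_{2}^{n}|_{H}=(\Psi^{N})^{n}=\Psi^{Nn}$ is a representative of $(\psi^{N})^{n}$, so $(\psi^{N})^{n}$ covers $(\phi_{m})^{n}$; that is precisely the condition $\psi^{N}>\phi_{m}$.

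The only delicate point is the bookkeeping of powers: the relation $>$ demands a single exponent applied symmetrically to both sides, while the commensurability construction produces \emph{a priori} distinct powers $N$ on $F(\psi)$ and $n$ on $F(\phi_{m})$. Replacing $\Theta$ by $\Theta^{n}$ in the covering step (i.e.\ using $k=n$ instead of $k=1$) absorbs this asymmetry, and beyond this no new input is needed once Lemma \ref{lem.top-min} is in hand.
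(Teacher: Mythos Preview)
Your argument is correct but takes a different route from the paper. The paper's proof is shorter: once $F(\psi)$ is known to sit inside $F(\phi_{m})$ as a finite-index subgroup, one takes representatives $\Psi$, $\Phi_{m}$ of suitable powers that agree on a finite-index subgroup of $F(\psi)$, passes to a further power so that $\Phi_{m}$ preserves $F(\psi)$, and then invokes Lemma~\ref{lem.auto-f.i.} (an automorphism of a free group is determined by its restriction to any finite-index invariant subgroup) to conclude $\Phi_{m}|_{F(\psi)}=\Psi$. Your approach instead re-runs the extension construction from the proof of Lemma~\ref{lem.top-min}, using the faithfulness of the action on $T_{+}$ (which rests on the asymmetry hypothesis) to see that $\Theta$ is well defined on $\langle F(\psi)\cup F(\phi_{m})\rangle=F(\phi_{m})$; specializing gives exactly $\Phi_{m}^{n}|_{F(\psi)}=\Psi^{N}$. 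Both arguments arrive at the same identity, but the paper's appeal to Lemma~\ref{lem.auto-f.i.} is purely group-theoretic and avoids re-invoking the tree machinery.

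One minor simplification of your bookkeeping: from $\Theta=\Phi_{m}^{n}$ and $\Theta|_{F(\psi)}=\Psi^{N}$ you already have $\Phi_{m}^{n}|_{F(\psi)}=\Psi^{N}$, which (taking $\Phi_{2}=\Phi_{m}\in\phi_{m}$ and exponent $k=n$ in the definition of covering) shows directly that $\psi^{N}$ covers $\phi_{m}$, hence $\psi^{N}>\phi_{m}$ with $k=1$. The extra pass to $(\psi^{N})^{n}$ covering $(\phi_{m})^{n}$ is harmless but unnecessary.
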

\begin{proof}
Note that by Lemma \ref{lem.top-min}, we may suppose $F(\psi)$ is a finite index subgroup of $F(\phi_{m})$.
Since $\psi\sim\phi_{m}$, we see that there are representatives $\Psi$ and $\Phi_{m}$ of some powers of $\psi$ and $\phi_{m}$ which agree in a finite index subgroup of $F(\psi)$.
After taking higher powers if necessary, we may suppose that $\Phi_{m}$ preserves $F(\psi)$ in $F(\phi_{m})$.
Then by Lemma \ref{lem.auto-f.i.}, we see that $\Phi_{m}|_{F(\psi)} = \Psi$.
\end{proof}
\subsection{Dynamically minimal element}
\begin{lem}\label{lem.dynamical}
Let $\psi$ and $\phi$ be atoroidal and fully irreducible.
Suppose that $\psi^{n}$ covers $\phi$.
Suppose further that every component of the ideal Whitehead graph $\wt{\mathcal{IW}}:=\wt{\mathcal{IW}}(\phi)=\wt{\mathcal{IW}}(\psi)$ admits no symmetry.
Then there is an element $\psi'$ in the commensurability class $[\psi]$ such that $\lambda(\psi')=\lambda(\psi)$ and 
both $\psi$ and $\phi$ covers $\psi'$.
In particular, if $\phi = \phi_{m}$, the topologically minimal element given by Lemma \ref{lem.top-min}, 
then we have $F(\phi_{m}) = F(\psi')$.
\end{lem}
\begin{proof}
By the definition  of covering, there are representations $\Psi\in\psi$ and $\Phi\in\phi$ such that 
\begin{equation}\label{eq. covering}
\Phi|_{F(\psi)} = i_{\delta}\circ\Psi^{n}
\end{equation}
for some $\delta\in F(\psi)$.
Let $T_{+}:=T_{+}(\psi) = T_{+}(\phi)$ (Proposition \ref{stability_of_limits}) and
$\Psi_{+}$ be a homothety given by Theorem \ref{associated map on T}.
We define a map $\Psi_{\sharp}:\IT\rightarrow\IT$ by
$$\Psi_{\sharp}(\gamma):=\Psi_{+}\circ\gamma\circ\Psi_{+}^{-1}.$$
To see this is well-defined, we need to prove $\Psi_{+}\circ\gamma\circ\Psi_{+}^{-1}\in\IT$.
First, note that  $\Psi_{+}$ is an injection as it is a homothety, and is surjective by its construction.
(see Theorem \ref{associated map on T} and Lemma \ref{structure of T}).
Hence $\Psi^{-1}_{+}$ makes sense.
Given two points $x,y\in T_{+}$, we have 
\begin{align*}
&d(\Psi_{+}\circ\gamma\circ\Psi_{+}^{-1}(x),\Psi_{+}\circ\gamma\circ\Psi_{+}^{-1}(y))\\
=&\lambda(\psi) d(\gamma\circ\Psi_{+}^{-1}(x),\gamma\circ\Psi_{+}^{-1}(y))\\
=&\lambda(\psi) d(\Psi_{+}^{-1}(x),\Psi_{+}^{-1}(y))\\
=&\lambda(\psi) \cdot \frac{1}{\lambda(\psi)}d(x,y)\\
=&d(x,y).
\end{align*}
Therefore, $\Psi_{\sharp}$ is well-defined.
Moreover, if for any $x\in T_{+}$ we have
$\Psi_{+}\circ\gamma\circ\Psi_{+}^{-1}(x) = x$, then
$\gamma\circ\Psi_{+}^{-1}(x) = \Psi_{+}^{-1}(x)$.
Hence if $\Psi_{+}\circ\gamma\circ\Psi_{+}^{-1}$ is the identity, then $\gamma$ must be the identity.
Thus, we see that $\Psi_{\sharp}$ is injective.
Furthermore, $\Psi_{\sharp}$ is a homomorphism as $\Psi_{+}$ is a bijection.
Note that we have $F(\psi)<F(\phi)<\IT$ and $\Psi_{\sharp}(F(\psi)) = F(\psi)$ in $\IT$.
By (\ref{eq. covering}) and Theorem \ref{associated map on T},
we have 
\begin{equation}
\Psi_{\sharp}^{n}(F(\phi)) = F(\phi).\label{eq.psi}
\end{equation}

We now consider $$\bar F:=\langle F(\phi)\cup\Psi_{\sharp}(F(\phi))\cup \Psi_{\sharp}^{2}(F(\phi))\cup\cdots \cup \Psi_{\sharp}^{n-1}(F(\phi))\rangle.$$
By (\ref{eq.psi}), $\Psi_{\sharp}(\bar F) = \bar F$.
Also, $\Psi_{\sharp}^{k}(F(\phi))$ contains $F(\psi)$ as a finite index subgroup for any $k\in\mathbb{N}$.
As $\Psi_{+}$ induces a graph automorphism on $\wt{\mathcal{IW}}$,
we see that every element $\gamma\in\bar F$ induces a graph automorphism on $\wt{\mathcal{IW}}$.
Since we assumed that every component of $\wt{\mathcal{IW}}$ is asymmetric and $F(\psi)<\bar F$, 
$\wt{\mathcal{IW}}/\bar F$ has only finitely many components.
Hence similarly to the discussion for Lemma \ref{lem.top-min}, we see that $F(\psi)$ is a finite index subgroup of $\bar F$ and $\bar F$ is a free group.
Hence $\Psi_{\sharp}$ gives an automorphism on $\bar F$ whose restriction on $F(\psi)$ is $\Psi$.
Let $\psi'\in\mathrm{Out}(\bar F)$ denote the outer automorphism containing $\Psi_{\sharp}$.
Then $\lambda(\psi)=\lambda(\psi')$ and $\psi$ and $\phi$ covers $\psi'$.
If $\phi=\phi_{m}$ is a topologically minimal element, then we must have $F(\phi_{m}) = \bar F = F(\psi')$.
\end{proof}

Now we are ready to prove the main theorem.
\begin{proof}[Proof of Theorem \ref{thm.main}]
Let $\phi_{m}\in[\phi]$ be a topologically minimal element given by Lemma \ref{lem.top-min}.
Let $\phi_{1}, \phi_{2}\in[\phi]$ with $F(\phi_{1})\cong F(\phi_{2}) \cong F(\phi_{m})$.
By Proposition \ref{stability_of_limits}, we see that $\log(\lambda(\phi_{2}))/\log(\lambda(\phi_{1}))\in\mathbb{Q}_{+}$.
Without loss of generality, we may assume $\lambda(\phi_{2})>\lambda(\phi_{1})$.
Suppose $\log(\lambda(\phi_{2}))/\log(\lambda(\phi_{1}))\in\mathbb{Q}_{+}\setminus\mathbb{N}$.
Then there is some integers $m,n\in\mathbb{Z}$ such that 
$\lambda:=\log(\lambda(\phi_{1}^{m}))+\log(\lambda(\phi_{2}^{n}))$ is the greatest common divisor, 
namely $\log(\lambda(\phi_{i}))/\lambda\in\mathbb{N}$ for both $i=1,2$ and $\lambda$ is greatest among real numbers with this property.
Without loss of generality, we may assume $m>0$ and $n<0$.
By Proposition \ref{stability_of_limits}) and $F(\phi_{1})\cong F(\phi_{2})$, we may suppose
$\Lambda(\phi_{1})=\Lambda(\phi_{2})$.
Let $\Lambda:=\Lambda(\phi_{1})=\Lambda(\phi_{2})$ 
denote the stable lamination.
Now since both $\phi_{1}$ and $\phi_{2}$ are in $\mathrm{Stab}(\Lambda)$,
$\phi_{2}^{n}\phi_{1}^{m}$ is in $\mathrm{Stab}(\Lambda)$ with stretch factor greater than $1$.
Hence by Proposition \ref{prop.BFH2.16}, we see that  $\phi':=\phi_{2}^{n}\phi_{1}^{m}\in [\phi]$.
Thus we can find an element in $[\phi]$ whose logarithm of the stretch factor is at most half of those of $\phi_{1}$ and $\phi_{2}$.
Now suppose there are $\psi\in[\phi]$ with  $\lambda(\phi')>\lambda(\psi)$. 
By combining Corollary \ref{cor.covered by all} and Lemma \ref{lem.dynamical}, 
we may find $\psi'\in[\phi]$ with the same stretch factor as $\psi$ and $F(\psi')\cong F(\phi_{m})$.
Then by repeating the argument above, we may find an element in $[\phi]$ 
whose logarithm of the stretch factor is at most half of that of $\phi'$.
Since there is a lower bound of the smallest stretch factor of fully irreducible elements in $\mathrm{Out}(F(\phi_{m}))$,
this process would terminate.
Thus we can find $\phi_{\min}$ with $\log(\lambda(\varphi))/\log(\lambda(\phi_{\min}))\in\mathbb{N}$ for all $\varphi\in[\phi]$ and 
$F(\phi_{\min})\cong F(\phi_{m})$.
If there is another element with the same property, again by Proposition \ref{prop.BFH2.16} they are covering equivalent.
Therefore $\phi_{\min}$ is the unique (covering equivalence class of) minimal element with respect to $<$.
\end{proof}

\section*{Acknowledgement}
The authors would like to thank an anonymous referee for useful comments.



\begin{thebibliography}{99}
\bibitem{BF}M. Bestvina and M. Feighn.
{\it Outer limits. Preprint},\\
\url{http://andromeda.rutgers.edu/~feighn/papers/outer.pdf}, 1994.

\bibitem{BFH97} M. Bestvina, M. Feighn. and M. Handel,
{\it Laminations, trees, and irreducible automorphisms of free groups},
GAFA, Geom. funct. anal. (1997) 7: 215. doi:10.1007/PL00001618.

\bibitem{BH92}M. Bestvina and M. Handel, 
{\it Train tracks and automorphisms of free groups},
 Ann. of Math. (2), 135 (1992), pp. 1--51.

\bibitem{CSW} D. Calegari, H. Sun and S. Wang,
{\it On fibered commensurability.}
Pacific J. Math. 250 (2011), no. 2, 287--317.

\bibitem{FH11} M. Feighn and H. Handel, 
{\it The recognition theorem for $\mathrm{Out} (F_{n})$}
Groups, Geometry, and Dynamics 5.1 (2011): 39-106.


\bibitem{HM11} M. Handel and L. Mosher,
{\it Axes in outer space},
Mem. Amer. Math. Soc., 213 (2011), pp. vi+104.

\bibitem{Kap14} I. Kapovich,
{\it Algorithmic detectability of iwip automorphisms},
Bull. Lond. Math. Soc. 46 (2014), no. 2, 279--290.

\bibitem{LS} R. Lyndon and P. Schupp,
{\it Combinatorial Group Theory},
Springer-Verlag, New York, 1977.

\bibitem{Mas13} H. Masai, 
{\it On commensurability of fibrations on a hyperbolic 3-manifold},
Pacific J. Math. 266 (2013), No. 2, 313--327.

\bibitem{MasGGD} H. Masai,
{\it Fibered commensurability and arithmeticity of random mapping tori},
Groups, Geometry, and Dynamics, 11 (2017), 1253--1279. DOI 10.4171/GGD/428. arXiv:1408.0348.

\bibitem{Mas17} H. Masai,
{\it On commensurability of quadratic differentials on surfaces},
RIMS K\^oky\^uroku Bessatsu, B66 (2017), 015--019, arXiv:1705.03199.

\bibitem{Pfa}
Catherine Pfaff,
{\it Ideal Whitehead Graphs in Out($F_r$) IV: 
Building ideal Whitehead graphs in higher ranks and ideal Whitehead graphs with cut vertices},
to appear in Topology Proceedings.

\bibitem{Sta}
J. R. Stallings,
{\it On torsion-free groups with infinitely many ends},
Annals of Mathematics (2), vol. 88 (1968), pp. 312--334.

\end{thebibliography}
\end{document}